\newtheorem{theorem}{Theorem}
\newtheorem{proposition}[theorem]{Proposition}
\newtheorem{lemma}[theorem]{Lemma}
\newtheorem{corollary}[theorem]{Corollary}
\theoremstyle{definition}
\newtheorem{definition}[theorem]{Definition}
\newtheorem{problem}[theorem]{Problem}
\newtheorem{remark}[theorem]{Remark}
\newtheorem{example}[theorem]{Example}
\newtheorem{question}[theorem]{Question}
\numberwithin{theorem}{section}
\newcommand{\PP}{\mathbb{P}}
\newcommand{\RR}{\mathbb{R}}
\newcommand{\CC}{\mathbb{C}}
\newcommand{\NN}{\mathbb{N}}
\newcommand{\TT}{\mathbb{T}}
\newcommand{\T}{\mathcal{T}}
\newcommand{\C}{\mathbb{C}}
\newcommand{\N}{\mathbb{N}}
\newcommand{\R}{\mathbb{R}}
\newcommand{\p}{\mathbb{P}}
\newcommand{\G}{\mathcal{G}}
\newcommand{\U}{\mathcal{U}}
\newcommand{\ot}{\otimes}
\newcommand{\one}{{\bf 1}}
\newcommand{\SSS}{\mathfrak{S}}
\newcommand{\s}{\mathop{\mathbb{S}}\nolimits}
\newcommand{\id}{\mathrm{id}}
\newcommand{\GL}{\mathop{\rm GL}\nolimits}
\newcommand{\SL}{\mathop{\rm SL}\nolimits}
\newcommand{\Sym}{\mathop{\rm Sym}\nolimits}
\newcommand{\seg}{\mathop{\rm Seg}\nolimits}
\newcommand{\rk}{\mathop{\rm rk}\nolimits}
\newcommand{\Lie}{\mathop{\rm Lie}\nolimits}
\newcommand{\compositions}{P}
\newcommand{\rd}[1]{\left\lfloor #1\right\rfloor}
\newcommand{\InvRing}{\mathcal{I}}
\newcommand{\UnivMap}{\varphi}
\DeclareMathOperator{\End}{End}
\DeclareMathOperator{\sgn}{sgn}
\DeclareMathOperator{\Rk}{\mathrm{rk}}
\newcommand\restr[2]{{
		\left.\kern-\nulldelimiterspace 
		#1 
		\vphantom{\big|} 
		\right|_{#2} 
}}
\title{Decomposing tensor spaces via path signatures}
\author{Carlos Am\'endola, Francesco Galuppi, Ángel David Ríos Ortiz,\\ Pierpaola Santarsiero, Tim Seynnaeve}
\newcommand\blfootnote[1]{
  \begingroup
  \renewcommand\thefootnote{}\footnote{#1}
  \addtocounter{footnote}{-1}
  \endgroup
}
\date{}
\begin{document}
\maketitle
\begin{abstract}
The signature of a path is a sequence of tensors whose entries are iterated integrals, playing a key role in stochastic analysis and applications. The set of all signature tensors at a particular level gives rise to the universal signature variety. We show that the parametrization of this variety induces a natural decomposition of the tensor space via representation theory, and connect this to the study of path invariants. We also reveal certain
constraints that apply to the rank and symmetry of a signature tensor. 
\end{abstract}

	\section{Introduction}
 \blfootnote{\textit{2020 Mathematics Subject Classification}: 60L10, 60L70, 05E10, 15A72, 14N07}
A \textit{path} is a continuous map $X:[0,1]\to\R^d$. This mathematical object can be used to interpret a wide range of situations. From a physical transformation to a meteorological model, from a medical experiment to the stock market, almost anything that involves parameters changing with time can be described by a path. As is common in applied mathematics, for explicit computations one approach is to associate discrete objects 
to continuous objects. In our case, we associate to a path $X$, tensors of format $d \times d \times \cdots \times d$.

Suppose that the components $X_1,\dots,X_d$ of $X$ are sufficiently smooth. For every positive integer $k$, the \emph{$k$-th level signature} of $X$ is an order $k$ tensor $\sigma^{(k)}(X)\in(\R^d)^{\otimes k}$, whose $(i_1,\ldots,i_k)$-th entry is
\[\int_{0}^{1}\int_{0}^{t_k}\dots\int_{0}^{t_3}\int_{0}^{t_2}\dot{X}_{i_1}(t_1)\cdots\dot{X}_{i_k}(t_k)dt_1\cdots dt_k.
\]
By convention, we define $\sigma^{(0)}(X)=1$. The sequence $\sigma(X)=(\sigma^{(k)}(X))_k$ is the \textit{signature} of $X$. Signatures were first defined in \cite{Chen}, and they enjoy many useful properties. For instance, the signature allows to uniquely recover a sufficiently smooth path up to a mild equivalence relation (see \cite[Theorem 4.1]{chenuniqueness}). 
One way to think about them is as a non-commutative analog of moments used to describe a probability distribution.

In the last decades the definition of signature was extended to paths that are not necessarily differentiable, and signatures rose to prominence in stochastic analysis \cite[Chapter 7]{friz2010multidimensional}. They also allow to establish fruitful links between stochastics and algebraic geometry. This conversation started in \cite{AFS19}, and since then mathematicians have studied signature tensors with tropical, numerical and combinatorial techniques \cite{tropic,numeric,combinat}. In this paper we would like to introduce two different perspectives on this fascinating family of tensors. We are going to look at signatures from the viewpoints of representation theory and tensor decompositions. 
We aim to uncover connections between these branches of Mathematics and the theory of path signatures, introducing these approaches in a language accessible for people with different backgrounds. 
Aside from the results we present, we hope this paper will be useful as a starting point for researchers who would like to work in these areas, and can promote 
collaborations among different communities.
The representation-theoretic approach appears already in \cite[Section 4]{AFS19}. It is the algebraic framework that captures \emph{invariants} - roughly speaking, features of the path that remain the same under linear transformations. {On the other hand, the tensor decomposition viewpoint  in the context of signatures is original, and studied here for the first time.}

The paper is organized as follows. Necessary preliminaries about tensor algebras and the definition of the universal signature variety are recalled in Section \ref{section: univ}.  In Section \ref{section: Thrall} we introduce the central objects that drive our study, namely Thrall modules, coming from the parametrization of the universal signature variety. 
Understanding the Thrall modules from the point of view of representation theory turns out to be equivalent to some classical questions in algebraic combinatorics. We spell out these connections in Sections \ref{sec:ThrallDecomp} and \ref{sec:idempinv}, which are primarily of expository nature. Section \ref{sec:ThrallDecomp} is about the decomposition of the Thrall modules, which is related to the so-called \emph{Thrall problem}\footnote{This is the inspiration behind the terminology \emph{Thrall modules.}}, while in Section \ref{sec:idempinv} we describe the connection more explicitly, using higher Lie idempotents.
In Section \ref{sec:Invariants}, we apply the representation-theoretic methods of the preceeding sections to study \emph{invariants} of path signatures. {It is worthwhile to note that signatures may give fitting answers to representation theoretic problems. For instance, in \Cref{eg:LieIdempotents3} they allow us to distinguish between isomorphic copies of the same isotypic components. In \Cref{our new invariant} they provide a canonical choice for the basis of a space of invariants.}
Finally, in \Cref{section: tensor rank} we discuss \emph{tensor rank}. We show how the rank of signature tensors is tightly related to their symmetry. This allows us to give a new characterization of paths that have only finitely many nonzero log-signature tensors. Supporting code that verifies and extends all the computations presented here is available at the MathRepo repository \begin{center} \url{https://mathrepo.mis.mpg.de/TensorSpacesViaSignatures}.
\end{center}
\vspace{-0.2cm}
\subsubsection*{Acknowledgements}
The authors thank Bernd Sturmfels for suggesting  the topic. This project started while the authors were attending the Thematic Research Programme Algebraic Geometry with Applications to Tensors and Secants (AGATES) for which they acknowledge University of Warsaw, Excellence Initiative – Research University and the Simons Foundation Award No. 663281 granted to the Institute of Mathematics of the Polish Academy of Sciences for the years 2021-2023.

Am\'endola acknowledges support from DFG CRC/TRR 388 ``Rough Analysis, Stochastic Dynamics and Related Fields'', Projects A04 and B01. Galuppi acknowledges support from the National Science Center, Poland, projects ``Complex contact manifolds and geometry of secants'', 2017/26/E/ST1/00231, and ``Tensor rank and its applications to signature tensors of paths'', 2023/51/D/ST1/02363. R\'ios Ortiz is supported by the European Research Council (ERC) under the European Union’s Horizon 2020 research and innovation programme ERC-2020-SyG-854361-HyperK. Santarsiero was partially supported by the Deutsche Forschungsgemeinschaft (DFG, German Research Foundation) -- Projektnummer 445466444 and by the European Union under NextGenerationEU. PRIN 2022, Prot. 2022E2Z4AK. Seynnaeve is supported by Research foundation -- Flanders (FWO) -- Grant Number 1219723N.

\section{The universal signature variety}\label{section: univ}
In this section we recall some preliminaries about tensor algebras from \cite{Reutenauer}, and give the definition of the universal variety. This is an \emph{algebraic variety}, i.e., the zero set of a system of polynomial equations. See \cite[Section 2.2]{AFS19} for a friendly introduction in this context. The \emph{universal variety}, as introduced in \cite[Section 4]{AFS19}, is the closure of the set of all tensors which arise as the signature of paths. We will give a self-contained, coordinate-free, algebraic definition of the universal variety. The relation to signature tensors of paths follows from the works of Chen and Chow \cite{chen1957integration,chow}, see \cite[Section 4.3]{AFS19}.

For the rest of the paper, we fix a $d$-dimensional $\CC$-vector space $V$. This space $V$ should be thought of as the complexification of the space $\RR^d$ where the paths we are interested in live. 
 To define the universal variety, we need to introduce the \emph{tensor algebra}.
	This is the algebra $\TT((V))=\prod_{k\in \NN}V^{\ot k}$ of formal tensor series over $V$, with multiplication given by the tensor product. Elements of $\TT((V))$ are infinite sums $\T=\T_{(0)}+\T_{(1)}+\T_{(2)}+\cdots$, where each term $\T_{(k)} \in V^{\ot k}$. %
 We set
 $$
 \TT_0((V)):=\{\T \in \TT((V)) \mid \T_{(0)}=0\} \quad \hbox{ and } \quad \TT_1((V)):=\{\T \in \TT((V)) \mid \T_{(0)}=1\}.
 $$
 
\begin{definition}
The \emph{free Lie algebra} $\Lie(V)$ is the Lie subalgebra of $\TT_0((V))$ generated by $V \subset \TT_0((V))$. In other words, it is the smallest vector subspace of $\TT_0((V))$ that contains $V$ and is closed under taking commutator bracket. Elements of $\Lie(V)$ are called \emph{Lie polynomials}.
 \end{definition}
 One verifies that $\Lie(V)$ is a graded vector space%
 , i.e.\ $\Lie(V)= \bigoplus_{k \in \NN} \Lie^k V$, where
	$\Lie^k V := \Lie(V) \cap V^{\ot k}$.
    The truncation  $\bigoplus_{i=0}^k{\Lie^i V}$ will be written $\Lie^{\leq k} (V)$.
	Finally, we will write $\Lie((V)):= \prod_{k \in \NN} \Lie^k V \subset \TT_0((V))$, whose elements are called \emph{Lie series}.

\begin{example}
By definition 
    $\Lie^1V = V$ and 
    \[
    \Lie^2V = \operatorname{span} \left\{[v,w]:=v\ot w - w \ot v \mid v,w \in V\right\} = \bigwedge \nolimits^2V \subset V^{\ot 2},
    \]
    the space of skew-symmetric matrices. Similarly, we have
    \begin{align} \label{eq:Lie3}
    \begin{split}
        \Lie^3V &= \operatorname{span} \left\{[u,[v,w]] \mid u,v,w \in V\right\} \\
        &=\operatorname{span} \left\{u \ot v\ot w - u \ot w \ot v - v \ot w \ot u + w \ot v \ot u \mid u,v,w \in V\right\} \subset V^{\ot 3}.
    \end{split}   
    \end{align}
\end{example}

The space $\Lie^kV$ admits a basis indexed by Lyndon words of length $k$ in the alphabet of $d$ letters.
A string of letters is called a \emph{Lyndon word} if it is lexicographically smaller than all of its rotations.         %
    By \cite[Corollary 4.14]{Reutenauer}, %
    the number $\mu_{k,d}$ of such Lyndon words is  
\begin{equation}\label{eq: dimension Lie^k}
    \mu_{k,d} = \dim \Lie^kV = \frac{1}{k} \sum_{t\mid k}\mu(t)d^{\frac{k}{t}},
\end{equation}
where %
 $\mu%
 $ is the \emph{M\"obius function}. For instance, there are $\mu_{1,2}+\mu_{2,2}+\mu_{3,2}=2+1+2 = 5$ Lyndon words of length at most 3 in the alphabet $\{1,2\}$, namely $\{1,2,12,112,122\}$.

\begin{definition}

We denote by %
$\G(V) \subseteq \TT_1((V))$ the image of $\Lie((V))$ under the \emph{exponential map}
    \begin{align*}
 \exp: \TT_0((V)) &\to \TT_1((V))\\
 \T &\mapsto \sum_{n=0}^{\infty}{\frac{\T^{\ot n}}{n!}}.\nonumber
 \end{align*}
\end{definition}
As a consequence of the celebrated \emph{Baker-Campbell-Hausdorff formula} %
\cite[Corollary 3.3]{Reutenauer}, $\G(V)$ is %
a group under the tensor product%
. %
We now arrive at our central definition.
\begin{definition}
Let $p_k$ denote the projection $\TT((V)) \to V^{\ot k}$%
. The \emph{universal signature variety}, or simply \emph{universal variety} $\U_k(V)$, is the image of $\Lie((V))$ under the composition 
	   \begin{align} \label{eq:univVarietyMap}
\begin{split}
	\UnivMap_k:=p_k\circ\exp:\Lie((V)) & \longrightarrow V^{\ot k} \\
    \T_{(1)} + \T_{(2)} + \cdots & \mapsto \sum%
    {\frac{1}{\ell!}\T_{(\alpha_1)} \ot \cdots \ot \T_{(\alpha_{\ell})}},
\end{split}
\end{align}
where the sum is over all tuples of positive integers
$(\alpha_1,\ldots,\alpha_{\ell})$ with $\alpha_1+\cdots +\alpha_{\ell} = k$. 
	\end{definition}
Note that on the right-hand side there are no $\T_{(i)}$ with $i>k$ appearing. Therefore we can replace the domain of \eqref{eq:univVarietyMap} by $\Lie^{\leq k} (V)$. By \cite[Corollary 4.11]{AFS19}, the universal variety $\U_k(V)$ is the Zariski closure of the set of all tensors in $V^{\ot k}$ that are the signature of a path in $\R^d\subset V$. %

By construction, the universal variety $\mathcal{U}_k(V)$ can also be seen as
	    \[
	    \U_{k}(V)=\G(V) \cap V^{\ot k}.
	    \]
The group $\G(V)$ has a nice combinatorial characterization in terms of the \emph{shuffle product}.
\begin{definition}\label{def: shuffle}
Let $\TT(V^*) = \bigoplus_{k \in \NN}(V^{\ot k})^*$ be the graded dual of the tensor algebra. If $\beta: V^{\ot k} \to \CC$ and $\gamma: V^{\ot \ell} \to \CC$ are linear maps, their shuffle product $\beta \shuffle \gamma$ is a linear function $V^{\ot k + \ell} \to \CC$ defined by
\begin{equation}\label{eq:shuffle}
(\beta \shuffle \gamma)(v_1 \ot \cdots \ot v_{k + \ell}) \coloneqq \sum%
{\beta(v_{i_1} \ot \cdots \ot v_{i_k}) \cdot \gamma(v_{j_1} \ot \cdots \ot v_{j_{\ell}})},
\end{equation}
where the sum runs over all indices %
$i_1<\ldots<i_k$ and $j_1<\ldots<j_\ell$ such that $\{i_1,\ldots,i_k\} \sqcup \{j_1,\ldots,j_\ell\}= \{1,\ldots,k+\ell\}$.
The space $\TT(V^*)$ equipped with the shuffle product is known as the \emph{shuffle algebra}.
\end{definition}

\begin{example}\label{example: 12 shuffle 34}
Let $\beta, \gamma \in \TT(V^*)$ be the coordinate functions $\beta(\mathcal{T})=\mathcal{T}_{12}$ and $\gamma(\mathcal{T})=\mathcal{T}_{34}$. Here $\mathcal{T}$ denotes an element of $\TT((V))$, and $\mathcal{T}_{ij}$ is the $ij$-th coordinate of the degree two part.
Abusing notation, we will write
\[
\mathcal{T}_{12 \shuffle 34}\coloneqq(\beta \shuffle \gamma)(\mathcal{T}) = 
\mathcal{T}_{1234}+\mathcal{T}_{1324}+\mathcal{T}_{1342}+\mathcal{T}_{3124}+\mathcal{T}_{3142}+\mathcal{T}_{3412}.
\]
\end{example}

We can now state the \emph{shuffle identity}, proven in \cite[Theorem 3.2(iii)]{Reutenauer}.

\begin{theorem}\label{shuffle identity}
    The free Lie group $\mathcal{G}(V) \subset \mathbb{T}_1((V))$ is given by
    \begin{align}\label{eq:shuffleIdentity}
    \begin{split}
 \mathcal{G}(V) &=\{ \mathcal{T} \in \mathbb{T}_1((V)) ~\vert~  (\beta \shuffle \gamma)(\mathcal{T})=\beta(\mathcal{T})\gamma(\mathcal{T}), \hbox{ for all } \beta, \gamma \in \TT(V^*) \} \\
 &= \{ \mathcal{T} \in \mathbb{T}_1((V)) ~\vert~  \mathcal{T}_{I \shuffle J}=\mathcal{T}_I\mathcal{T}_J \hbox{ for all words } I,J\}. 
     \end{split}
 \end{align}
\end{theorem}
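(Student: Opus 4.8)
The plan is to prove the two inclusions of the first characterization; the second (with words $I,J$) then follows by linearity, since the coordinate functionals $\mathcal T \mapsto \mathcal T_I$ span $\TT(V^*)$ and both sides of the shuffle identity are bilinear in $(\beta,\gamma)$.

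For the inclusion $\mathcal G(V) \subseteq \{\mathcal T : (\beta\shuffle\gamma)(\mathcal T) = \beta(\mathcal T)\gamma(\mathcal T)\}$, I would first record the key algebraic fact that the shuffle product on $\TT(V^*)$ is dual to the \emph{deconcatenation coproduct} on $\TT((V))$; equivalently, for any $\mathcal S, \mathcal T \in \TT((V))$ one has $(\beta\shuffle\gamma)(\mathcal S \ot \mathcal T) = \sum \beta(\mathcal S_{(i)})\gamma(\mathcal T_{(j)}) + (\text{cross terms})$, and the clean statement is that $\beta\shuffle\gamma$ evaluated on a tensor product decomposes according to how the word splits. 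The crucial special case is: $(\beta\shuffle\gamma)(\exp(\T)) = \beta(\exp(\T))\,\gamma(\exp(\T))$ whenever $\T$ is \emph{primitive}, i.e. grouplike-exponential behaviour. I would prove this by the standard Hopf-algebraic computation: $\exp(\T)$ is grouplike for the deconcatenation coproduct precisely when $\T$ is primitive (Lie series), and grouplike elements are exactly the algebra homomorphisms from the shuffle algebra to $\CC$ — which is the assertion $(\beta\shuffle\gamma)(\mathcal T) = \beta(\mathcal T)\gamma(\mathcal T)$. Since every element of $\mathcal G(V)$ is $\exp(\T)$ with $\T \in \Lie((V))$, and Lie series are primitive (this is immediate on degree-$1$ generators and is preserved under brackets, hence under limits), this direction follows. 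One must also check the constant term condition $\mathcal T_{(0)} = 1$, which holds since $\exp$ maps into $\TT_1((V))$.

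For the reverse inclusion, suppose $\mathcal T \in \TT_1((V))$ satisfies all shuffle identities. Because $\mathcal T_{(0)} = 1$, we may form $\log(\mathcal T) \in \TT_0((V))$; call it $\T$. The shuffle identities say exactly that $\mathcal T$ is grouplike for deconcatenation, which by the Hopf-algebra dictionary forces $\T = \log(\mathcal T)$ to be primitive. Then one invokes the structure theorem (Friedrichs' criterion, \cite[Theorem 1.4]{Reutenauer} or similar) identifying the primitive elements of $\TT((V))$ with the free Lie series $\Lie((V))$; hence $\mathcal T = \exp(\T) \in \mathcal G(V)$.

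The main obstacle is the duality bookkeeping in the first direction: making precise, in a coordinate-free way, that $\beta\shuffle\gamma$ applied to $\exp(\T)$ factors as $\beta(\exp(\T))\gamma(\exp(\T))$ for primitive $\T$. The slick route is to set up the two compatible structures on $\TT((V))$ — the concatenation product (already in use) and the deconcatenation coproduct — observe that the shuffle algebra $\TT(V^*)$ is the graded dual \emph{as a Hopf algebra}, so that "$\mathcal T$ is grouplike" and "$\beta\shuffle\gamma \mapsto \beta(\mathcal T)\gamma(\mathcal T)$ for all $\beta,\gamma$" are literally the same condition; then the whole theorem reduces to the single statement $\{\text{grouplike}\} = \exp(\{\text{primitive}\}) = \exp(\Lie((V))) = \mathcal G(V)$, the first equality being formal and the second being the Lie-series characterization of primitives cited above. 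I would be careful about completions throughout: all sums are over graded pieces, each identity is checked degree by degree, so no convergence issues arise, and $\exp, \log$ are mutually inverse bijections between $\TT_0((V))$ and $\TT_1((V))$.
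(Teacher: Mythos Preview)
The paper does not supply its own proof of this theorem; it is stated with a citation to \cite[Theorem 3.2(iii)]{Reutenauer}. Your outline is the standard Hopf-algebraic argument, and it is essentially what one finds in Reutenauer: equip $\TT((V))$ with the concatenation product and the coproduct $\Delta$ for which the elements of $V$ are primitive; observe that the shuffle product on $\TT(V^*)$ is dual to $\Delta$, so that the shuffle-multiplicative elements of $\TT_1((V))$ are exactly the grouplike ones; and then identify the grouplike elements with $\exp(\{\text{primitives}\})$ and the primitives with $\Lie((V))$ via Friedrichs' criterion. The degree-by-degree bookkeeping you mention is exactly how the completion issues are handled there as well.

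One terminological correction: the coproduct you want is \emph{not} the deconcatenation coproduct. Deconcatenation sends $v_1\cdots v_n$ to $\sum_{i=0}^n (v_1\cdots v_i)\ot(v_{i+1}\cdots v_n)$ and is dual to the \emph{concatenation} product on $\TT(V^*)$, not to the shuffle product. The coproduct dual to shuffle is the \emph{deshuffle} (or \emph{unshuffle}) coproduct: the unique algebra morphism $\Delta:\TT((V))\to\TT((V))\,\widehat{\ot}\,\TT((V))$ with $\Delta(v)=v\ot 1+1\ot v$ for $v\in V$. With that fix, your equivalences ``grouplike $\Leftrightarrow$ shuffle-multiplicative'' and ``primitive $\Leftrightarrow$ Lie series'' go through exactly as you describe.
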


\section{Thrall modules}\label{section: Thrall}
Now we will explain how the parametrization of the universal variety in \eqref{eq:univVarietyMap} gives a decomposition of the 
space $V^{\ot k}$. As a warm-up, let us consider the case $k=2$. Then \eqref{eq:univVarietyMap} becomes
\begin{center}
\begin{tabular}{cccc}
$\UnivMap_2:$ &$V\oplus\bigwedge^2V$  &$\to$ &$V^{\ot 2}$\\
&$(v,A)$  &$\mapsto$ &$\frac{1}{2}v \ot v + A$.
\end{tabular}
\end{center}
We can see this map as the sum of the two maps  $(v,A) \mapsto \frac{1}{2}{v \ot v}$ and $(v,A) \mapsto A$. The linear span of the image of the first map is the space of symmetric matrices in $V^{\ot 2}$, and the image of the second map is the space of skew-symmetric matrices. We want to generalize this observation to $k\ge 2$%
. Recall that a \emph{partition} $\lambda \vdash k$ of $k$ of length $\ell$ is a tuple $\lambda = (\lambda_1,\ldots,\lambda_\ell)$ such that $\lambda_1\geq \ldots \geq \lambda_\ell >0$ and $\lambda_1+\cdots+\lambda_\ell=k$. We decompose the map $\varphi_k: \Lie^{\leq k}(V) \rightarrow V^{\otimes k}$ as $\UnivMap_k=\sum_{\lambda \vdash k}{f_{\lambda}}$, with
\begin{align}\label{eq: map f_lambda}
    f_{\lambda}: \Lie^{\leq k}(V) &\to
     V^{\ot k}  \\
    \T_{(1)} + \cdots + \T_{(k)} &\mapsto \sum_{\alpha \in \compositions(\lambda)}\frac{1}{\ell!}{\T_{(\alpha_1)} \ot \cdots \ot \T_{(\alpha_{\ell})}},\nonumber
\end{align}
where $\compositions(\lambda)$ is the set of distinct permutations of $\lambda$. For a partition $\lambda$, denote by $a_i(\lambda)$ the number of times the integer $i$ occurs in $\lambda$.
The map in \eqref{eq: map f_lambda} factors as $f_\lambda = g_\lambda \circ \nu_\lambda$ as follows
\[
\begin{matrix} %
\Lie^{\leq k}(V) &\xrightarrow{\nu_\lambda}& \Sym^{a_1(\lambda)}(\Lie^1 V) \otimes \cdots \otimes \Sym^{a_{k}(\lambda)}(\Lie^k V) &\xrightarrow{g_{\lambda}} &V^{\ot k} \\
    \T_{(1)} + \cdots + \T_{(k)} &\mapsto& \T_{(1)}^{\ot a_1(\lambda)} \otimes \cdots \otimes \T_{(k)}^{\ot a_k(\lambda)} &\mapsto& \sum_{\alpha}\frac{1}{\ell!}{\T_{(\alpha_1)} \ot \cdots \ot T_{(\alpha_{\ell})}}.
\end{matrix}
\]
The intermediate space was first studied by Thrall \cite[Section 7]{Thrall}. It will play a central role in this paper.
\begin{definition} \label{def:ThrallModule}
For any partition $\lambda$ of $k$, we define the \emph{Thrall module}
\[
W_{\lambda}(V) := \Sym^{a_1(\lambda)}(\Lie^1 V) \otimes \cdots \otimes \Sym^{a_{k}(\lambda)}(\Lie^k V).%
\]
\end{definition}
For the trivial partitions of $k$, we get $W_{(1,\dots,1)}(V) = \Sym^k V$ and $W_{(k)}(V) = \Lie^kV$. The following theorem 
is a corollary of the famous \emph{Poincar{\'e}-Birkhoff-Witt (PBW) theorem}, and was first proven %
in \cite{witt1937treue}. It shows that the Thrall modules give a decomposition of the tensor space $V^{\ot k}$. We will call this the \emph{Thrall decomposition}.
\begin{theorem} \label{thm:Wiso}
    The maps $g_{\lambda}$ are injective, and their sum gives an isomorphism of vector spaces
    \begin{equation} \label{eq:mainDecomposition}
    \sum_{\lambda \vdash k} g_{\lambda}: \bigoplus_{\lambda \vdash k}{W_{\lambda}(V)} \xrightarrow{\cong} V^{\ot k}.
\end{equation}
\end{theorem}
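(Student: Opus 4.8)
The plan is to recognize \Cref{thm:Wiso} as the Poincar\'e--Birkhoff--Witt theorem for the free Lie algebra, and to match $\sum_\lambda g_\lambda$ with the PBW symmetrization isomorphism. First I would invoke Witt's theorem \cite{witt1937treue} (see also \cite{Reutenauer}): the tensor algebra $\bigoplus_k V^{\ot k}$, under concatenation, is canonically the universal enveloping algebra of $\Lie(V)$, with $\Lie^iV\subset V^{\ot i}$ the degree-$i$ Lie elements. Grading $\Lie(V)=\bigoplus_{i\ge1}\Lie^iV$ by weight ($\Lie^iV$ in weight $i$) turns this enveloping algebra into a graded algebra whose weight-$k$ piece is precisely $V^{\ot k}$. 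Working over $\CC$, PBW then provides a homogeneous vector-space isomorphism, the symmetrization map
\[
\omega\colon \Sym\bigl(\Lie(V)\bigr)\ \xrightarrow{\ \cong\ }\ \bigoplus_k V^{\ot k},\qquad
x_1\cdots x_m\ \longmapsto\ \tfrac{1}{m!}\sum_{\tau\in\SSS_m}x_{\tau(1)}\ot\cdots\ot x_{\tau(m)},
\]
the product on the left being taken in the symmetric algebra. Since $\Sym(\Lie(V))=\bigotimes_{i\ge1}\Sym(\Lie^iV)$, its weight-$k$ component equals $\bigoplus_{\lambda\vdash k}\bigotimes_{i\ge1}\Sym^{a_i(\lambda)}(\Lie^iV)=\bigoplus_{\lambda\vdash k}W_\lambda(V)$, so restricting $\omega$ to weight $k$ already produces an isomorphism $\bigoplus_{\lambda\vdash k}W_\lambda(V)\xrightarrow{\ \cong\ }V^{\ot k}$.

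Next I would check that this isomorphism coincides, up to a nonzero scalar on each summand, with $\sum_\lambda g_\lambda$. Unwinding \Cref{def:ThrallModule} and the factorization $f_\lambda=g_\lambda\circ\nu_\lambda$: on the diagonal element $\T_{(1)}^{\ot a_1(\lambda)}\ot\cdots\ot\T_{(k)}^{\ot a_k(\lambda)}$ of $W_\lambda(V)$, the map $g_\lambda$ outputs $\sum_{\alpha\in\compositions(\lambda)}\tfrac{1}{\ell(\lambda)!}\T_{(\alpha_1)}\ot\cdots\ot\T_{(\alpha_{\ell(\lambda)})}$. Grouping the $\ell(\lambda)!$ terms of the corresponding $\omega$-sum according to the weight sequence they produce — each rearrangement of $\lambda$ being hit exactly $\prod_i a_i(\lambda)!$ times, since the level-$i$ factors there are all equal — shows $g_\lambda=\bigl(\prod_i a_i(\lambda)!\bigr)^{-1}\omega|_{W_\lambda(V)}$ on these diagonal spanning elements, hence everywhere by linearity (this uses that $g_\lambda$ is well defined, i.e.\ that $f_\lambda$ factors through $\nu_\lambda$). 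Rescaling the $\lambda$-summand by the nonzero constant $\prod_i a_i(\lambda)!$ is an automorphism of $\bigoplus_{\lambda\vdash k}W_\lambda(V)$, so $\sum_{\lambda\vdash k}g_\lambda$ is an isomorphism as well; and each $g_\lambda$, being a nonzero multiple of a restriction of the injective map $\omega$ (equivalently, the restriction of the isomorphism $\sum_\lambda g_\lambda$ to a direct summand), is injective, which is the theorem's first claim.

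The step I expect to be the main obstacle is this last identification — not its idea but its bookkeeping: one must treat non-diagonal elements of $\bigotimes_i\Sym^{a_i(\lambda)}(\Lie^iV)$ correctly and keep the normalizations $1/\ell(\lambda)!$ and $1/m!$ straight. A cleaner alternative avoiding $\omega$ would argue via an explicit basis: fix a homogeneous, weight-refining total order on a basis $B$ of $\Lie(V)$ (say the Lyndon basis); by PBW the ordered monomials $b_1\le\cdots\le b_m$ with $\sum_j\deg b_j=k$ form a basis of $V^{\ot k}$, and the same combinatorial data (for each $i$, an $a_i(\lambda)$-element multiset from $B\cap\Lie^iV$) indexes a basis of $\bigoplus_{\lambda\vdash k}W_\lambda(V)$; then $\sum_\lambda g_\lambda$ sends such a basis vector to $b_1\ot\cdots\ot b_m$ plus terms strictly lower in the PBW filtration (times a nonzero scalar depending only on $\lambda$), so the map is triangular with nonvanishing diagonal and hence invertible. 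As a consistency check I would also verify $\sum_{\lambda\vdash k}\dim W_\lambda(V)=d^k$ directly from the Witt formula \eqref{eq: dimension Lie^k}, using the cyclotomic identity $(1-dt)^{-1}=\prod_{i\ge1}(1-t^i)^{-\mu_{i,d}}$ together with $\sum_{a\ge0}\dim\Sym^a(\Lie^iV)\,t^{ia}=(1-t^i)^{-\mu_{i,d}}$.
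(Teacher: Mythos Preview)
Your proposal is correct and follows the same route as the paper's proof sketch: invoke PBW for $\Lie(V)$ together with the identification $U(\Lie(V))=\TT(V)$, then read off the weight-$k$ piece as $\bigoplus_{\lambda\vdash k}W_\lambda(V)\cong V^{\ot k}$. You go further than the paper by actually matching $\sum_\lambda g_\lambda$ with the PBW symmetrization map up to the harmless scalars $\prod_i a_i(\lambda)!$ (a point the paper leaves implicit), and your alternative triangularity argument via a Lyndon basis and your dimension check via the cyclotomic identity are both sound.
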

 \begin{proof}[Proof sketch]
   The PBW theorem states that for any Lie algebra $\mathcal{L}$ there is a vector space isomorphism between the symmetric algebra $\Sym(\mathcal{L})$, and the so-called \emph{universal enveloping algebra} $U(\mathcal{L})$. Combining this with the fact that $U(\Lie V) = \TT(V) \coloneqq \bigoplus_k V^{\ot k} $ yields the isomorphism \eqref{eq:mainDecomposition}.  
 \end{proof}

    In light of \Cref{thm:Wiso}, we will identify $W_{\lambda}(V)$ with the subspace
    \[
    \operatorname{span}\left\{\sum_{\alpha \in \compositions(\lambda)}{\T_{(\alpha_1)} \ot \cdots \ot \T_{(\alpha_{\ell})}} \, \middle| \, \T_{(1)} + \cdots + \T_{(k)} \in \Lie^{\leq k}(V)\right\}\subset V^{\ot k}.
    \]

\begin{example}\label{example:k=3Wmodules}
Let $k=3$. By \eqref{eq:univVarietyMap}, the map $\varphi_3: \Lie^{\leq 3}(V) \rightarrow V^{\otimes 3}$ becomes
    \begin{equation}\label{eq:kEquals3new}
\begin{matrix}
\varphi_3:  &  V \oplus \bigwedge^2V \oplus \Lie^3 V & \to & V^{\ot 3} \\
& v+A+L & \mapsto & \frac{1}{6}v^{\ot 3} + \frac{1}{2}(A \ot v + v \ot A) + L,
\end{matrix}
\end{equation}
which is the sum of the three maps
\begin{itemize}
    \item $f_{(1,1,1)}(v+A+L) = \frac{1}{6}v^{\ot 3}$, 
\item $f_{(2,1)}(v+A+L) = \frac{1}{2}(A\ot v+v\ot A)$, 
     \item $f_{(3)}(v+A+L) = L$.
\end{itemize}

For the partition $(2,1)$ we have
\[
\begin{matrix} %
    \nu_{(2,1)}: & \Lie^{1}V \oplus \Lie^2V &\to& \Lie^1 V \otimes \Lie^2 V \\
    & v + A & \mapsto & v \ot A
\end{matrix}
\]
and
\[
\begin{matrix} %
    g_{(2,1)}: & \Lie^1 V \otimes \Lie^2 V &\hookrightarrow& V^{\ot 3} \\
    & v \ot A & \mapsto & \frac{1}{2}(v \ot A + A \ot v).
\end{matrix}
\]

In this case the Thrall modules are
\begin{align*}
 W_{(3)}(V) &= \Lie^3V, \\ W_{(2,1)}(V) &= \operatorname{span}\left\{ A \ot v + v \ot A \,  \middle| \, v \in V, A \in \bigwedge \nolimits^2V \right\},\\
  W_{(1,1,1)}(V) &= \operatorname{span}\{v^{\ot 3} \mid v \in V\} = \Sym^3(V).
\end{align*}
\Cref{thm:Wiso} says that every tensor in $V^{\ot 3}$ can be uniquely written as a tensor in $W_{(1,1,1)}(V)$ plus a tensor in $W_{(2,1)}(V)$ plus a tensor in $W_{(3)}(V)$. %
\end{example}

\begin{remark} \label{rmk:GaluppiCoords}
 The Thrall decomposition \eqref{eq:mainDecomposition} yields a choice of coordinates on $V^{\ot k}$. In these coordinates, the maps $\nu_\lambda$ are given by monomials, as illustrated in \cite[Section 4]{G19}. This turns out to be useful for the study of geometric properties of $\U_{k}(V)$ and related varieties, see \cite{CGM}. %
\end{remark}

 The shuffle algebra also admits a Thrall decomposition: \begin{equation} \label{eq:grading}
 \TT(V^*) \cong \bigoplus_{k \in \NN}\bigoplus_{\lambda \vdash k}W_{\lambda}(V^*).   
 \end{equation}
The space $W_{\lambda}(V^*)$ consists of the linear functions $\TT((V)) \to \CC$ that only depend on the summand $W_{\lambda}(V)$.
We now show that $\TT(V^*)$ is graded by partitions as an algebra. This will play an important role in \Cref{sec:Invariants}. Here $\lambda \cup \mu$ denotes the partition with $a_{i}(\lambda \cup \mu) = a_{i}(\lambda) + a_{i}(\mu)$ for all $i$; for instance $(3,2,1) \cup (2,2) = (3,2,2,2,1)$.  

\begin{theorem} \label{thm:ShuffleThrall}
    If $\beta \in W_{\lambda}(V^*) \subset \TT(V^*)$ and $\gamma \in W_{\mu}(V^*) \subset \TT(V^*)$, then the product $\beta \shuffle \gamma$ lies in $W_{\lambda \cup \mu}(V^*)$. %
\end{theorem}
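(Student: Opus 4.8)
The plan is to recast membership in $W_\nu(V^*)$ as a multihomogeneity condition for polynomial functions on $\Lie^{\leq n}(V)$, and then to deduce the theorem from the shuffle identity (\Cref{shuffle identity}). Equip the algebra of polynomial functions on $\Lie^{\leq n}(V)=\bigoplus_{i=1}^{n}\Lie^i V$ with its natural $\NN^n$-multigrading, in which a function has multidegree $(b_1,\dots,b_n)$ when it is homogeneous of degree $b_i$ in the $\Lie^iV$-block. Since $f_\pi=g_\pi\circ\nu_\pi$ and $\nu_\pi$ sends $\T_{(1)}+\dots+\T_{(n)}$ to $\T_{(1)}^{\ot a_1(\pi)}\ot\dots\ot\T_{(n)}^{\ot a_n(\pi)}$, the polynomial map $f_\pi$ is multihomogeneous of multidegree $a(\pi):=(a_1(\pi),\dots,a_n(\pi))$, and so is $\delta\circ f_\pi$ for every linear functional $\delta$; moreover distinct partitions of $n$ give distinct multidegrees, a partition being determined by its multiplicities. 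Hence $\delta\circ\UnivMap_n=\sum_{\pi\vdash n}\delta\circ f_\pi$ is exactly the decomposition of $\delta\circ\UnivMap_n$ into multihomogeneous components. The first key step will be to verify that the image of $f_\pi$ linearly spans $W_\pi(V)$ (using that $\Sym^aU$ is spanned by $a$-th tensor powers and that $g_\pi$ is a linear isomorphism onto $W_\pi(V)$ by \Cref{thm:Wiso}), so that $\delta\circ f_\pi=0$ precisely when $\delta$ vanishes on $W_\pi(V)$. This yields the dictionary
\[
\delta\in W_\nu(V^*)\iff \delta\circ\UnivMap_n\text{ is multihomogeneous of multidegree }a(\nu),
\]
because $\delta\in W_\nu(V^*)$ means $\delta$ kills $W_\pi(V)$ for all $\pi\neq\nu$, i.e. the only component of $\delta\circ\UnivMap_n$ that survives is the one of multidegree $a(\nu)$.

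With this in place, the second step applies the shuffle identity. Let $\beta\in W_\lambda(V^*)$ with $\lambda\vdash k$ and $\gamma\in W_\mu(V^*)$ with $\mu\vdash\ell$, and pick any Lie polynomial $\T=\T_{(1)}+\dots+\T_{(k+\ell)}\in\Lie^{\leq k+\ell}(V)\subset\Lie((V))$, so that $\exp(\T)\in\G(V)$. As $\beta$, $\gamma$ and $\beta\shuffle\gamma$ are supported in degrees $k$, $\ell$ and $k+\ell$, evaluating them on $\exp(\T)$ is the same as evaluating $\beta$ on $\UnivMap_k(\T)$, $\gamma$ on $\UnivMap_\ell(\T)$, and $\beta\shuffle\gamma$ on $\UnivMap_{k+\ell}(\T)$. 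The shuffle identity then gives
\[
(\beta\shuffle\gamma)(\UnivMap_{k+\ell}(\T))=(\beta\shuffle\gamma)(\exp(\T))=\beta(\exp(\T))\,\gamma(\exp(\T))=\beta(\UnivMap_k(\T))\,\gamma(\UnivMap_\ell(\T)),
\]
that is, the identity of polynomial functions $(\beta\shuffle\gamma)\circ\UnivMap_{k+\ell}=(\beta\circ\UnivMap_k)\cdot(\gamma\circ\UnivMap_\ell)$ on $\Lie^{\leq k+\ell}(V)$, with the two right-hand factors viewed as functions ignoring the top blocks. By the dictionary the factors are multihomogeneous of multidegrees $a(\lambda)$ and $a(\mu)$, hence their product is multihomogeneous of multidegree $a(\lambda)+a(\mu)=a(\lambda\cup\mu)$. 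Applying the dictionary in the other direction gives $\beta\shuffle\gamma\in W_{\lambda\cup\mu}(V^*)$.

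The main obstacle I anticipate is the first step: turning the equivalence between the representation-theoretic statement that $\delta$ depends only on $W_\nu(V)$ and the concrete statement that $\delta\circ\UnivMap_n$ is multihomogeneous of multidegree $a(\nu)$ into an honest biconditional. Its content is that the images of the $f_\pi$ span the Thrall modules $W_\pi(V)$ and that the multidegrees $a(\pi)$ separate the partitions $\pi\vdash n$; neither is hard, but both are needed. After that, the shuffle identity finishes the proof almost for free, since multiplying functions adds multidegrees while $\cup$ adds multiplicities of parts.
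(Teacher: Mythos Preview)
Your proposal is correct and follows essentially the same approach as the paper: both use the shuffle identity to write $(\beta\shuffle\gamma)\circ\exp=(\beta\circ\exp)\cdot(\gamma\circ\exp)$, translate membership in $W_\nu(V^*)$ into multihomogeneity of degree $a(\nu)$ in the Lie-graded pieces $\T_{(i)}$, and then add multidegrees. Your write-up is more careful than the paper's about the biconditional in the ``dictionary'' (the paper asserts the reverse direction with ``which can only be true if''), and your justification via $\operatorname{Im} f_\pi$ spanning $W_\pi(V)$ and injectivity of $\pi\mapsto a(\pi)$ is exactly what is needed to make that step honest.
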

\begin{proof}
For any $\mathcal{T} = \T_{(1)} + \T_{(2)} + \cdots \in \Lie((V))$, we have by the shuffle identity (\Cref{shuffle identity})
\begin{equation} \label{eq:shuffleProof}
    (\beta \shuffle \gamma)(\exp(\mathcal{T})) = \beta(\exp(\mathcal{T})) \cdot \gamma(\exp(\mathcal{T})).
\end{equation}
Now the assumption $\beta \in W_{\lambda}(V^*)$ means that $\beta(\exp(\mathcal{T}))=\beta(f_{\lambda}(\mathcal{T}))$, where $f_{\lambda}$ is the map defined in \eqref{eq: map f_lambda}.
From this we see that if we rescale some $\T_{(i)}$ by a factor $t$, $\beta(\exp(\mathcal{T}))$ will rescale by a factor $t^{a_i(\lambda)}$. In other words, $\beta \circ \exp$ is a homogeneous polynomial of degree $a_i(\lambda)$ in $\T_{(i)}$. Similarly, $\gamma \circ \exp$ is a homogeneous polynomial of degree $a_i(\mu)$ in $\T_{(i)}$. But then by \eqref{eq:shuffleProof}, $(\beta \shuffle \gamma) \circ \exp$ is a homogeneous polynomial of degree $a_i(\lambda) + a_i(\mu)$ in $\T_{(i)}$, which can only be true if $\beta \shuffle \gamma \in W_{\lambda \cup \mu}(V^*)$.
\end{proof}

\section{Decomposition of the Thrall modules} \label{sec:ThrallDecomp}
The purpose of this section is to study the Thrall modules as $\GL(V)$-representations. To do so, we start by recalling standard notions of representation theory and we refer to \cite{FHrep} for a more detailed exposition.

A \emph{representation} of a group $G$ (also called \emph{$G$-module}) is given by a vector space $U$ and a group homomorphism $\rho: G \to \mathrm{GL}(U)$. We will be exclusively concerned with polynomial representations of the groups $\mathrm{GL}(V)$ and $\mathrm{SL}(V)$, where $V$ is a $d$-dimensional $\C$-vector space. This means that the entries of $\rho(M)$ are polynomials in the entries of $M \in G$. %
In particular, the spaces $V^{\ot k}$, $\Sym^k(V)$, $\bigwedge^k V$, $\Lie^k V$, and $W_{\lambda}(V)$ are all $\mathrm{GL}(V)$-representations since such constructions are functorial. A representation is \emph{irreducible} if it cannot be written as a direct sum of representations in a nontrivial way. Moreover, every representation can be written as a direct sum of irreducible representations. %

A very well-known decomposition of the tensor space $V^{\ot k}$ into irreducible $\GL(V)$-modules is the \emph{Schur-Weyl decomposition}:
 \begin{equation} \label{eq:SchurWeylDecomp}
    V^{\ot k} =  \bigoplus_{\mu \vdash k} \s_\mu(V)^{\oplus m_{\mu}},
\end{equation}
where $\mathbb{S}_\mu (V)$ is the \emph{Schur module} determined by the partition $\mu$ of $k$ and $m_{\mu}$ denotes the multiplicity of each module. For example, the Schur-Weyl decomposition of $
V^{\otimes 3}$ is 
\begin{equation} \label{eq:SchurWeyl3}
    V^{\otimes 3}= \Sym^3 (V)\oplus \bigwedge \nolimits^3 V\oplus (\mathbb{S}_{(2,1)}(V))^{\oplus 2}.
\end{equation}

The three summands $\Sym^3 (V)$, $\bigwedge ^3 V$ and $(\mathbb{S}_{(2,1)}(V))^{\oplus 2}$ are the \emph{isotypic components} of $V^{\ot 3}$ and they are canonically defined. However, the decomposition of %
the last isotypic component into the sum of the two irreducible modules $\mathbb{S}_{(2,1)}(V)$ is not unique. 
   More generally, for every representation, we have a unique decomposition into isotypic components, where every isotypic component decomposes into a number of copies of the same irreducible module. %

Schur modules are the main actors of the Schur-Weyl decomposition and since it will be useful later, let us briefly present a way to define them. Recall that a partition $\mu=(\mu_1,\dots,\mu_\ell)$ can be visualized as a grid of boxes all justified to the left having $\ell$ rows corresponding to the pieces of the partition $\mu_1, \dots,\mu_\ell$ respectively. A \emph{Young tableau} associated to a partition $\mu$ of $k$ is a filling of the partition with integers in $\{1,\dots,k \}$ without repetitions. It is called \emph{standard} if the rows and the columns are strictly increasing.
For example, the standard Young tableaux associated to the partition $\mu=(2,1)$ are 
\begin{equation*}
\ytableausetup{boxsize=1em}
    \begin{ytableau}
    1 & 2 \\ 3
    \end{ytableau} 
    \quad \text{ and }
\quad \ytableausetup{boxsize=1em} %
    \begin{ytableau}
    1 & 3 \\ 2
    \end{ytableau} \text{ .}
\end{equation*}
 \begin{definition}
     Let $\mu \vdash k$ be a partition of length at most $\dim V$.  
Fix a Young tableau $\tau$ associated to $\mu$ and call $\mathcal{R}_\tau, \mathcal{C}_\tau$ the subgroups of the symmetric group $\mathfrak{S}_k$ fixing rows, respectively columns, of the tableau. The \emph{Young symmetrizer} is the map
\begin{align}\label{eq:YoungSymmetrizer}
	\begin{split}
		c_\tau \colon V^{\otimes k}& \longrightarrow V^{\otimes k}\\
		v_1\otimes \cdots \otimes v_k &\mapsto \sum_{s \in \mathcal{C}_\tau} \sum_{t\in \mathcal{R}_\tau} \mathrm{sgn}(s) v_{t(s(1))}\otimes \cdots \otimes v_{t(s(k))}.
	\end{split}
\end{align}
The \emph{Schur module} $\mathbb{S}_\mu (V) \coloneqq c_\tau(V^{\otimes k})$ is the image of the Young symmetrizer. Notice that the construction still makes sense even when $\mu$ has length greater than $\dim V$, but in that case it will return the zero module.
 \end{definition} 

The symmetrizer $c_\tau$ depends on how we fill our grid of boxes corresponding to the partition $\mu$, but different fillings give isomorphic $\GL(V)$-representations, justifying the notation $\mathbb{S}_\mu (V)$.
However, we would like to stress that different choices of $\tau$ give rise to different embeddings of $\mathbb{S}_\mu (V)$ into $V^{\ot k}$. %

For example, to the partition $(3)$ is associated the Young tableau $\tau = \begin{ytableau}
    1 & 2 & 3
    \end{ytableau}$. In this case $\mathcal{R}_\tau = \SSS_3$ and $\mathcal{C}_\tau = \{\id\}$, so \eqref{eq:YoungSymmetrizer} yields $\s_{(3)}(V) = \Sym^3(V)$. More generally, for any $k$ we have $\s_{(k)}(V) = \Sym^k(V)$ and similarly $\s_{(1^k)}(V) = \bigwedge^kV$.
    For $\tau = \begin{ytableau}
    1 & 2 \\ 3
    \end{ytableau}$ we have $\mathcal{R}_\tau = \{\id, (12)\}$ and $\mathcal{C}_\tau = \{\id,(13)\}$, hence
    \[
    c_{\tau}(V^{\ot 3}) = \operatorname{span}\{ v_1 \ot v_2 \ot v_3 +  v_2 \ot v_1 \ot v_3 - v_3 \ot v_2 \ot v_1 - v_3 \ot v_1 \ot v_2 \mid v_i \in V\}.
    \]
\begin{remark}
    The most common way of decomposing the isotypic components in \eqref{eq:SchurWeylDecomp} into irreducibles is given by 
\begin{equation} \label{eq:SchurWeylStandard}
V^{\ot k} \cong \bigoplus_{\lambda \vdash k}\bigoplus_{\tau}c_\tau(V^{\ot k}), 
\end{equation}
where the second sum is over all standard Young tableaux on $\lambda$. However, we will soon see that the Thrall modules give an alternative to this.
\end{remark}

We now have two different decompositions of the tensor space $V^{\ot k}$: the decomposition \eqref{eq:SchurWeylDecomp} into isotypic components, and the decomposition \eqref{eq:mainDecomposition} into Thrall modules. Comparing these two decompositions amounts to determining the intersections 
\begin{equation} \label{eq:intersections}
   \s_{\mu}(V)^{\bigoplus m_{\mu}} \cap W_{\lambda}(V)\subset V^{\ot k}
\end{equation}
for all partitions $\mu, \lambda \vdash k$. In fact, there are two related questions one can ask. %

\begin{question}
    What are the intersections \eqref{eq:intersections} \emph{as $\GL(V)$-representations}?
\end{question}
\begin{question} \label{question:idempotents}
    What are the intersections \eqref{eq:intersections} \emph{as subspaces of $V^{\ot k}$}?
\end{question}

The second question will be addressed in \Cref{sec:idempinv}. Concerning the first one, \eqref{eq:intersections} is a multiple of the irreducible module $\s_\mu(V)$ and we only need to determine the multiplicity.
\begin{definition} \label{def:ThrallCoefficients}
For $\lambda$ and $\mu$ partitions of $k$, the \emph{Thrall coefficients} $a^{\lambda}_{\mu} \in \NN$ are the coefficients appearing in the $\GL(V)$-module decomposition
    \begin{equation} \label{eq:ThrallCoefficients}
    W_{\lambda}(V) \cong \bigoplus_{\mu \vdash k}  \s_{\mu}(V)^{\oplus a^{\lambda}_{\mu}}.
    \end{equation}
\end{definition}
The coefficients $a^{\lambda}_{\mu}$ do not depend on the dimension of $V$, because %
the construction of $W_{\lambda}(V)$ is functorial in $V$. Comparing the two decompositions tells us that $\sum_\lambda a_\mu^\lambda=m_\mu$ for every $\mu\vdash k$.

Before providing general methods to decompose Thrall modules into irreducibles, let us examine some small cases. The matrix case $k=2$ is covered in Section~\ref{section: Thrall}, so now let $k=3$. 
\begin{example}\label{example: d=k=3}
We compare the Schur-Weyl decomposition \eqref{eq:SchurWeyl3} to 
the Thrall decomposition
\[V^{\otimes 3}=W_{(1,1,1)}(V)\oplus W_{(2,1)}(V)\oplus W_{(3)}(V).\]
As we saw in \Cref{example:k=3Wmodules}, $W_{(1,1,1)}(V)=\s_{(3)}(V)=\Sym^3(V)$.
Now we only have to match the remaining components
	\[(\s_{(2,1)}(V) )^{\oplus 2}\oplus\s_{(1,1,1)}(V) \cong  W_{(2,1)}(V)\oplus W_{(3)}(V)\]
	or, equivalently, we have to understand if either $W_{(2,1)}(V)$ or $W_{(3)}(V)$ are reducible. One approach is to compute the dimensions of the modules involved. On the one hand we have
	\[
	\dim \s_{(1,1,1)}(V) = \dim \bigwedge \nolimits^3V = \binom{d}{3} \text{ and } \dim \s_{(2,1)}(V) = \frac{d^3-d}{3}
	\]
(see, for instance, \cite[Exercise 6.4]{FHrep} for the second formula). On the other hand, by \eqref{eq: dimension Lie^k} we get
	\begin{align*}
		\dim W_{(3)}(V)=\mu_{3,d}=\frac{d^3-d}{3} \hbox{ and } 
		\dim W_{(2,1)}(V) = \mu_{2,d}\cdot \mu_{1,d} =  \frac{d^3-d^2}{2}=\frac{d^3-d}{3}+\binom{d}{3}.
	\end{align*}
	So we find
	$W_{(3)}(V)\cong\s_{(2,1)}(V)$ and $\s_{(2,1)}(V) \oplus\s_{(1,1,1)}(V) \cong  W_{(2,1)}(V)$. 
\end{example}

We have already seen that $W_{(1^k)}(V) = \Sym^k(V) =\s_{(k)}(V)$.
In particular $W_{(1^k)}(V)$ is an irreducible $\GL(V)$-module. 
In the next example we indicate how to decompose certain Thrall modules using standard facts from representation theory. 
\begin{example}\label{rem:partionsWithOnly1And2} If $\lambda = (2^a,1^b)$, then 
$W_{\lambda}(V) = \Sym^a(\bigwedge^2V) \ot \Sym^b(V)$. Finding the decomposition of this representation is an exercise in representation theory. The decomposition of $\Sym^a(\bigwedge^2V)$ is given by
\begin{equation} \label{eq:SmWedge2}
\Sym^a\left(\bigwedge \nolimits ^2 V\right) \cong \bigoplus_{\lambda \vdash a}{\s_{(\lambda_1,\lambda_1,\lambda_2,\lambda_2,\ldots)}(V)},
    \end{equation}
see for instance  \cite[Equation 6.7.11]{Lands}. The tensor product of a Schur module with $\Sym^b(V)$ can then be computed using \emph{Pieri's rule} \cite[Equation 6.8]{FHrep}. For instance for $\lambda = (2,1)$ we have $W_{\lambda}(V) = \bigwedge^2V \ot V$, which by Pieri's rule is isomorphic to $\s_{(2,1)}(V) \oplus\s_{(1,1,1)}(V)$. This is consistent with \Cref{example: d=k=3}. 
\end{example}

 For a more systematic way of computing the decomposition of $W_{\lambda}(V)$ we can use character theory. For a detailed introduction to character theory, we refer the reader to \cite[Lecture 6]{FHrep}. Roughly speaking, to each representation $U$ of $\GL(\CC^d)$ we can associate a symmetric polynomial in $d$ variables known as the \emph{character}. Moreover, decomposing a representation into irreducibles amounts to writing its character as a combination of Schur polynomials.

The character of the representation $W_\lambda(V)$ is a symmetric polynomial called the \emph{higher Lie character} or \emph{Gessel-Reutenauer symmetric polynomial}, see for instance \cite[Equation 2.1]{GR93}. It can be computed in a purely combinatorial way, so that the problem of determining the Thrall coefficients becomes a problem in algebraic combinatorics: they are the coefficients of the expansion of the Gessel-Reutenauer symmetric polynomials in the Schur basis. 
In \cite{TheMathRepoFile}, we explain how to compute the higher Lie characters and how to algorithmically compute the Thrall decomposition. Even for a partition of $k=30$, the computation of the decomposition \eqref{eq:ThrallCoefficients} takes less than a second.

\begin{remark}
For $\lambda \vdash k\leq 4$, the decomposition of $W_{\lambda}(V)$ is multiplicity-free, meaning that each $a^{\lambda}_\mu$ is either zero or one. However, this pattern does not persist: already for $k=5$ we have $a^{(4,1)}_{(3,1,1)}=2$, i.e., the Thrall module $W_{(4,1)}(V)$ contains two copies of the Schur module $\s_{(3,1,1)}(V)$.
\end{remark}

    An open problem in algebraic combinatorics known as \emph{Thrall's problem} %
     asks for a combinatorial interpretation for the numbers $a_{\mu}^{\lambda}$. In \cite[Theorem 3.1]{Schocker03}, there is a closed formula for $a_{\mu}^{\lambda}$, but it contains denominators and rational numbers and is therefore not considered a solution to Thrall's problem. However, some cases are known. For example, for the partition $\lambda = (2^a,1^b)$, we can decompose $W_{\lambda}(V)$ by \Cref{rem:partionsWithOnly1And2}, yielding a combinatorial formula for $a_{\mu}^{\lambda}$. Also for $\lambda=(k)$ there is a combinatorial interpretation due to Klyachko \cite{klyachko}. We refer the reader to \cite[Section 8.6.1]{Reutenauer} and \cite{thrallproblem} for a more detailed overview. %

 The special cases of Thrall's problem listed above consider a particular $\lambda$ and arbitrary $\mu$. It is natural to wonder if one can also find formulas for some particular $\mu$ and arbitrary $\lambda$. For instance, the case of paths in the plane corresponds to $V=\CC^2$. In this case, the Schur modules $\s_{\mu}(V)$ are only nonzero if $\mu$ has length at most two, leading us to ask the following.
    
\begin{question}
    Given $\mu=(\mu_1,\mu_2)$, 
    is there a combinatorial interpretation for the multiplicities $a_{(\mu_1,\mu_2)}^\lambda$?
\end{question}

\section{Idempotents}
\label{sec:idempinv}

In this section, we address how the Thrall modules $W_{\lambda}(V)$ sit inside $V^{\ot k}$, cf.\ \Cref{question:idempotents}. 
To make this more precise, we need to introduce idempotents and to recall some more notions of representation theory. %

Giving a decomposition of a vector space $U$ into linear subspaces $U_i$ is equivalent to giving a collection of operators $\pi_i \in \End(U)$ that are idempotent ($\pi_i^2=\pi_i$), are pairwise orthogonal $\pi_i\pi_j=0$ if $i \neq j$), and sum to the identity. Explicitly, $\pi_j$ is the unique linear map that is constant on $U_j$ and $0$ on $\bigoplus_{i\neq j}{U_i}$. 
For a decomposition of $V^{\ot k}$ into $\GL(V)$-subrepresentations, these projection operators lie in $\End_{\GL(V)}(V^{\ot k})$, i.e.\ they commute with the $\GL(V)$-action. But the algebra $\End_{\GL(V)}(V^{\ot k})$ admits a beautiful description known as Schur-Weyl duality (see e.g.\ \cite[Chapter 6]{FHrep}). Recall that the group algebra $\CC[G]$ is defined by having a basis consisting of the elements of $G$, and multiplication induced by the group operation in $G$. Schur-Weyl duality then states that $\End_{\GL(V)}(V^{\ot k}) \cong \CC[\SSS_k]$, the group algebra of the symmetric group where a permutation $\sigma \in \CC[\SSS_k]$ corresponds to an endomorphism of $V^{\ot k}$ permuting the $k$ factors. Thus, giving a decomposition of $V^{\ot k}$ into (not necessarily irreducible) $\GL(V)$-submodules is equivalent to giving a decomposition $\one = \sum_i{e_i} \in \CC[\SSS_k]$ into pairwise orthogonal idempotents.

\begin{example}
The Young symmetrizers \eqref{eq:YoungSymmetrizer} can be identified with
 \[
 c_\tau = \left(\sum_{t \in \mathcal{R}_\tau}t\right)\cdot\left(\sum_{s \in \mathcal{C}_\tau}\sgn(s)s\right)\in\C[\SSS_k],
 \] 
 where $\tau$ is a Young tableau on a partition $\mu$. The \emph{normalized} Young symmetrizers $\tilde{c}_\tau=\frac{c_\tau}{m_\mu}$ are idempotents. For $k \leq 4$ they are orthogonal and sum to 1, so they are precisely the idempotents describing the decomposition \eqref{eq:SchurWeylStandard}. For $k \geq 5$, one has to orthogonalize the $\tilde{c}_\tau$ to get the idempotents describing the decomposition \eqref{eq:SchurWeylStandard}.
\end{example}

In order to describe the $W_{\lambda}(V)$ as subspaces of $V^{\ot k}$, all we have to do is to compute the corresponding idempotents. 
These idempotents were already studied in \cite{GarsiaReutenauer89} and are known as \emph{higher Lie idempotents}, we will denote them by $E_\lambda$. In that paper, there is a formula to compute the higher Lie idempotents \cite[Theorem 3.2]{GarsiaReutenauer89}, which has been 
implemented in \texttt{SageMath}. In particular, given a partition \verb|lambda| of the integer \verb|k|, the following code returns the higher Lie idempotent $E_{\lambda}$. See also \cite{TheMathRepoFile}.
\begin{verbatim}
DescentAlgebra(QQ, k).I().idempotent(lambda).to_symmetric_group_algebra()
\end{verbatim}

Let us continue our detailed study of the three factors case. In \Cref{example: d=k=3} we proved that the Thrall module $W_{(3)}(V)$ is isomorphic to the Schur module $\s_{(2,1)} (V)$. We also recall that in the Schur-Weyl decomposition of $V^{\otimes 3}$, the module $\s_{(2,1)} (V)$ appears with multiplicity two. We see in the following example how higher Lie idempotents play a central role to help us recognize which copy of $\s_{(2,1)} (V)$ is isomorphic to $W_{(3)}(V)$.
\begin{example} \label{eg:LieIdempotents3}
If $k=3$, then the \texttt{SageMath} algorithm outputs %
   \begin{align*}
  &E_{(3)} = \frac{1}{3}{\id} - \frac{1}{6}{(12)} - \frac{1}{6}{(23)} - \frac{1}{6}{(123)} - \frac{1}{6}{(132)} + \frac{1}{3}{(13)},\\
  &E_{(1,1,1)} =\sum_{\sigma \in \SSS_3}{\sigma}
  \qquad \text{and} \qquad E_{(2,1)} = \frac{1}{2}\id  - \frac{1}{2}(13).
   \end{align*}
We can verify this by hand by computing that
    \[
    T \cdot E_{\lambda} = \begin{cases}
        T & \text{if } T \in W_{\lambda}(V),\\
        0 & \text{if } T \in W_{\lambda'}(V) \text{ with } \lambda' \neq \lambda.
    \end{cases}
    \]
    For instance if $T \in W_{(3)}(V) = \Lie^3V$ we can use \eqref{eq:Lie3} to see that indeed $T \cdot E_{(1,1,1)} = T \cdot E_{(2,1)}=0$, but $T \cdot E_{(3)}=T$.
    From \Cref{example: d=k=3} we know that $W_{(2,1)}(V)$ further decomposes as a sum of two irreducibles. This corresponds to writing the idempotent $E_{(2,1)}$ as a sum of the two idempotents 
    \begin{itemize}
        \item $E_{(2,1);1} := \frac{1}{6}{\id} - \frac{1}{6}{(12)} - \frac{1}{6}{(23)} + \frac{1}{6}{(123)} + \frac{1}{6}{(132)} - \frac{1}{6}{(13)}$,
        \item $E_{(2,1);2} := \frac{1}{3}{\id} + \frac{1}{6}{(12)} + \frac{1}{6}{(23)} - \frac{1}{6}{(123)} - \frac{1}{6}{(132)} - \frac{1}{3}{(13)}$,
    \end{itemize}
    leading to the decomposition
    \begin{align*}
        V^{\ot 3} &= W_{(1,1,1)}(V) \oplus \left(W_{(2,1);1}(V) \oplus W_{(2,1);2}(V)\right) \oplus W_{(3)}(V) \\
        &\cong \s_{(3)}(V) \oplus \left(\s_{(1,1,1)}(V) \oplus \s_{(2,1)}(V)\right) \oplus \s_{(2,1)}(V).
    \end{align*}
    The first two idempotents $E_{(1,1,1)}$ and $E_{(2,1);1}$ agree with the idempotents giving $\s_{(3)}(V)$ and $\s_{(1,1,1)}(V)$ respectively in the Schur-Weyl decomposition.
    This comes as no surprise since the subrepresentations $\s_{(3)}(V)$ and $\s_{(1,1,1)}(V)$ have multiplicity one. On the other hand, the final two idempotents are not equal to any idempotent associated to any Young tableau $\tau$. In \cite{TheMathRepoFile}, we verify that 
    $W_{(3)}(V) = c_\tau(V^{\ot 3})$ where $\tau$ is the standard tableau
    \ytableausetup{boxsize=1em}
    \begin{ytableau}
    1 & 3 \\ 2
    \end{ytableau} and $c_\tau$ is the Young symmetrizer introduced in \eqref{eq:YoungSymmetrizer}. If instead we define the Young symmetrizer by first considering the subgroup fixing columns of a tableaux and then fixing the rows, we get the map $\tilde{c}_\tau$, which is analogous to the one in \eqref{eq:YoungSymmetrizer}.
We verify that $W_{(2,1);2}(V) = \tilde{c}_\tau(V^{\ot 3})$,  where $\tau$ is the standard tableau
    \begin{ytableau}
    1 & 2 \\ 3
    \end{ytableau}.
\end{example}

For $k=4$, some of the components are not equal to $c_\tau(V^{\ot k})$ or $\tilde{c}_\tau(V^{\ot k})$ for any tableau $\tau$, which shows that it can be challenging to describe the Thrall modules in general. 

\section{Invariants of paths} \label{sec:Invariants}

Let us fix a group $G$ acting on $V$. A \emph{tensor invariant} is a linear function $\beta \in \TT(V^*)$ %
such that $\beta(g\cdot \mathcal{T}) = \beta(\mathcal{T})$ for all $g \in G$ and $\mathcal{T} \in \TT((V))$. When we apply such a $\beta$ to a signature tensor $\mathcal{T}$, it extracts a number that is geometrically meaningful, in the sense that it does not depend on transforming the path by an element of $G$. It follows from \Cref{def: shuffle} that the shuffle product of two invariants is again an invariant. Hence the vector space of invariants forms a subalgebra of the shuffle algebra, which we will denote by $\InvRing_G(V)$.
In \cite[Section 7]{DiehlReizenstein}, the question was posed to find generators and relations for this algebra for various subgroups $G$ of $\GL(V)$. As we will soon see, the invariant ring $\InvRing_{\SL(V)}(V)$ is not finitely generated. To get around this, in \Cref{rmk:invTruncate} we introduce a truncated version which is finitely generated, and initiate the study of generators and relations of these restricted invariant rings.
\begin{remark}
The relevance of this ring $\InvRing_G(V)$ in relation to signature tensors is explained by the shuffle identity \eqref{eq:shuffleIdentity}: %
if $\mathcal{T}$ is the  signature of a path, then the shuffle product of two invariants of that path is just the usual product. 
The shuffle identity also implies that we can identify $\InvRing_G(V)$ with a classical object in invariant theory, namely the ring of invariants
\[\Sym(\Lie V^*)^{G}.\] 
See also \Cref{rmk:invTruncate} below. %
\end{remark}

Any group $G$ acts as the identity on the degree $0$ part of $\TT(V^*)$, i.e.\ elements of $\TT_0(V^*)$ are invariants for any $G$. In the case $G=\GL(V)$ there are no other invariants, because the action of a scalar matrix $a\cdot I$ rescales the elements of $\TT_k(V)$ by a factor $a^k$. An 
interesting case is $G=\SL(V) \coloneqq \{A \in \GL(V) \mid \det(A) = 1\}$. %
We will abbreviate $\InvRing_{\SL(V)}(V)$ to $\InvRing(V)$ for ease of notation. 

\begin{example} \label{eg:Levy}
Let $\dim V=2$. The map $\beta:\TT((V)) \to \C$ 
given by
    \begin{equation} \label{eq:Levy}
        \beta(\T) = \frac{\T_{12}-\T_{21}}{2}
    \end{equation}
is an $\SL(V)$-invariant. This has a geometric interpretation: if $\T$ is the signature of a piecewise smooth path $X:[0,1] \to \RR^2$, then $\beta(\T)$ 
is the signed area enclosed by the curve and the straight line segment joining the points $X(0)$ and $X(1)$, see \cite[page 54]{lyons2007differential}. In the stochastic context of rough paths, $\beta(\T)$ is known as the \emph{Lévy area}, see \cite[page 39]{friz2020course}.
\end{example}

The representation theory of $\SL(V)$ is almost identical to that of $\GL(V)$: if $d=\dim V$, then a complete set of pairwise nonisomorphic $\SL(V)$-representations is given by the Schur modules $\s_{\mu}(V)$, where $\mu$ runs over the partitions with at most $d-1$ parts. If $\mu$ has $d$ parts, then as $\SL(V)$-representations $\s_{\mu}(V) \cong \s_{\mu'}(V)$, where $\mu'=(\mu_1-\mu_d,\ldots,\mu_{d-1}-\mu_d)$. In particular, $\SL(V) \subseteq \GL(V)$ acts as the identity on a Schur module $\s_{\mu}(V)$ if and only if
$\mu$ is of the form $(\ell^d)$ for some $\ell \in \NN$, 
i.e. if and only if the Young diagram $\mu$ is a rectangle with exactly $d$ rows. This means that 
\[
\InvRing(V) = \bigoplus_{\ell \in \NN}\s_{(\ell^d)}(V^*)^{m_{(\ell^d)}} \subset \bigoplus_{\ell \in \NN}(V^*)^{\ot d\ell} \subset \TT(V^*). 
\]

Since $\TT(V^*)$ is graded by partitions by Theorem \ref{thm:ShuffleThrall}, the subalgebra  $\InvRing(V)$ is also graded in this way:
\[
\InvRing(V) = \bigoplus_{k\in\N}\bigoplus_{\lambda\vdash k} U_{\lambda}(V^*),
\]
where $U_{\lambda}(V^*) \coloneqq \InvRing(V) \cap W_\lambda(V^*)$. If $\lambda \vdash d\ell$ for some $\ell \in \NN$ then the dimension of $U_{\lambda}(V^*)$ is given by the Thrall coefficient $a^\lambda_{(\ell^d)}$; otherwise $U_{\lambda}(V^*)=0$. 
Computing these Thrall coefficients can give us partial information about the generators and relations of $\InvRing(V)$. %
For instance, we can make the following observations:
\begin{enumerate}[label=(\alph*)]
    \item \label{it:LieInvar} If $\beta \in U_{(d \ell)}(V^*)$ is nonzero, then $\beta$ cannot be expressed in terms of invariants of lower degrees. This means that we need at least $a^{(d\ell)}_{(\ell^d)}$ generators in degree $d\ell$.
    \item \label{it:Product} If $\beta \in U_{\lambda}(V^*)$ and $\gamma \in U_{\mu}(V^*)$ are both nonzero, then their product $\beta\cdot \gamma$ is a nonzero element of $U_{\lambda \cup \mu}(V^*)$. %
\end{enumerate}

Elements of $U_{(d\ell)}(V^*) \subset W_{(d\ell)}(V^*) = \Lie^{d\ell}(V^*)$ are known as \emph{Lie invariants}, and have received considerable attention in the classical literature; see \cite[page 208]{Reutenauer} for an overview. In particular $U_{(d\ell)}(V^*)$ is always nonzero, except in the case $\ell=1$ and $d\geq 3$ and in the case $\ell=2$ and $d\in\{2,3\}$. Together with \ref{it:LieInvar} above, this implies that 
 the algebra $\InvRing(V)$ is not finitely generated. 

One way of obtaining a ring that is finitely generated is to truncate the free Lie algebra and we can perform this operation since since $\Lie^{\leq m}(V^*)$ is finite-dimensional.
\begin{definition}\label{rmk:invTruncate}
    We define  \[
\InvRing_{\leq m}(V) \coloneqq \Sym(\Lie^{\leq m}(V^*))^{\SL(V)} = \bigoplus_{k \in \NN} \bigoplus_{\substack{\lambda \vdash k \\ \lambda_1 \leq m}} U_{\lambda}(V^*) \subset \InvRing(V).
    \]
\end{definition}
We remark that $ \InvRing_{\leq m}(V)$ is finitely generated by Hilbert's finiteness theorem (see for instance \cite[Theorem 2.2.10]{DerksenKemper}). 
    The ring $\InvRing_{\leq m}(V)$ is related to the \emph{rough Veronese variety} $\mathcal{R}_{k,m}(V)$ introduced in \cite[Section 5.4]{AFS19} and studied in \cite{G19} and in \cite[Section 2]{CGM}, which is the image of the restriction of the map \eqref{eq:univVarietyMap} to $\Lie^{\leq m}(V)$.

\begin{example}\label{example: alternating signatures}
Let $k=d$. Up to scaling, in $(V^*)^{\ot k}$ there is a unique invariant, given by the Schur module $\s_{(1^k)}(V^*)$. In \cite[Section 4]{AmendolaLeeMeroni} this invariant is called \emph{alternating signature}. If $d=2e$ is even, one can use \eqref{eq:SmWedge2} to see that the alternating signature is contained in $W_{(2^{e})}(V^*)$. %
This implies in particular that for a Lie polynomial $\T \in \Lie(V)$, its alternating signature only depends on the degree 2 part $\T_{(2)} \in \Lie^2V$. 
    More explicitly, we have $W_{(2^{e})}(V^*) = \Sym^2(\bigwedge^{e}V^*)$. This space contains a unique invariant, known as the \emph{Pfaffian}. In coordinates it is given by
    \begin{equation*}\label{eq:alternatingEven}
        \sum_{\sigma \in \SSS_{2{e}}}{\sgn(\sigma)\prod_{i=1}^{{e}}{\mathcal{T}_{\sigma(2i-1)\sigma(2i)}}}.
    \end{equation*}
    Compare with \cite[Lemma 4.1]{AmendolaLeeMeroni}. In the case $e=1$ we recover the signed area from \Cref{eg:Levy}.
    In the case where $k=d=2{e}+1$ is odd, the alternating signature lies in $W_{(2^{e},1)}(V^*) = \Sym^2(\bigwedge^{e}V^*) \ot V^*$. For %
 a Lie polynomial $\T \in \Lie(V)$, it can be computed as 
 \begin{equation*}\label{eq:alternatingOdd}
        \sum_{\sigma \in \SSS_{2{e}+1}}{\sgn(\sigma) \mathcal{T}_{\sigma(1)}\prod_{i=1}^{{e}}{\mathcal{T}_{\sigma(2i)\sigma(2i+1)}}}.
    \end{equation*}
Compare with \cite[Lemma 4.3]{AmendolaLeeMeroni}. 
\end{example}

Next we compute generators and relations for $\InvRing_{\leq m}(V)$ in small cases.

\begin{proposition}
    The invariant ring $\mathcal{I}_{\leq 2}(V)$ is a univariate polynomial ring, generated by the alternating signature.
\end{proposition}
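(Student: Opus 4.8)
The plan is to combine the partition grading of $\InvRing(V)$ with the explicit $\GL(V)$-decomposition of the Thrall modules $W_{(2^a,1^b)}(V^*)$. By \Cref{rmk:invTruncate} we have $\InvRing_{\leq 2}(V)=\bigoplus_{a,b\geq 0}U_{(2^a,1^b)}(V^*)$, because the partitions $\lambda$ with $\lambda_1\leq 2$ are exactly those of the form $(2^a,1^b)$, and in that case $W_{(2^a,1^b)}(V^*)=\Sym^a(\bigwedge^2 V^*)\ot\Sym^b(V^*)$ as in \Cref{rem:partionsWithOnly1And2}. Recall moreover that $U_{(2^a,1^b)}(V^*)$ vanishes unless $2a+b=d\ell$ for some $\ell\in\NN$, in which case its dimension equals the Thrall coefficient $a^{(2^a,1^b)}_{(\ell^d)}$. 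So the first---and main---step is to show that for each $\ell$ there is exactly one pair $(a,b)$ with $2a+b=d\ell$ and $a^{(2^a,1^b)}_{(\ell^d)}\neq 0$, and that this coefficient is then $1$.

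I would compute $a^{(2^a,1^b)}_{(\ell^d)}$ exactly as in \Cref{rem:partionsWithOnly1And2}: decompose $\Sym^a(\bigwedge^2 V^*)$ via \eqref{eq:SmWedge2} and then apply Pieri's rule to tensor with $\Sym^b(V^*)$. This shows that $\s_{(\ell^d)}(V^*)$ occurs in $W_{(2^a,1^b)}(V^*)$ if and only if there is a partition $\nu$ whose rows come in equal consecutive pairs (a \emph{doubled} partition), with $|\nu|=2a$, such that the skew shape $(\ell^d)/\nu$ is a horizontal strip of size $b$; the multiplicity is the number of such $\nu$, each counted with Pieri coefficient $1$. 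The heart of the argument is then the combinatorial claim that the interlacing condition defining a horizontal strip forces $\nu_1=\cdots=\nu_{d-1}=\ell$ with $0\leq\nu_d\leq\ell$, and that requiring $\nu$ to be doubled pins it down uniquely to $\nu=(\ell^{2\lfloor d/2\rfloor})$, with $b=0$ when $d$ is even and $b=\ell$ when $d$ is odd. Since the doubling map $\kappa\mapsto(\kappa_1,\kappa_1,\kappa_2,\kappa_2,\dots)$ is injective, this gives $a^{(2^a,1^b)}_{(\ell^d)}=1$ precisely when $(a,b)=(\ell\lfloor d/2\rfloor,\ \ell(d\bmod 2))$, and $0$ otherwise.

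Consequently $\InvRing_{\leq 2}(V)$ is one-dimensional in every degree $d\ell$ of the grading and zero in all remaining degrees. To identify the generator, recall from \Cref{example: alternating signatures} that the alternating signature $\beta$ is a nonzero $\SL(V)$-invariant lying in $W_{(2^{\lfloor d/2\rfloor})}(V^*)$ if $d$ is even and in $W_{(2^{\lfloor d/2\rfloor},1)}(V^*)$ if $d$ is odd; in either case this is precisely the unique degree-$d$ summand singled out above, so $\beta$ spans it. Since $\InvRing_{\leq 2}(V)\subseteq\Sym(\Lie^{\leq 2}(V^*))$ sits inside a polynomial ring it is an integral domain, so $\beta^\ell\neq 0$ for all $\ell$; and iterating \Cref{thm:ShuffleThrall} shows $\beta^\ell\in W_{(2^{\ell\lfloor d/2\rfloor})}(V^*)$, resp. $W_{(2^{\ell\lfloor d/2\rfloor},1^\ell)}(V^*)$, which is exactly the degree-$d\ell$ piece. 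Hence $\beta^\ell$ spans $\InvRing_{\leq 2}(V)$ in degree $d\ell$, and as the $\beta^\ell$ have pairwise distinct degrees we get $\InvRing_{\leq 2}(V)=\bigoplus_{\ell\geq 0}\CC\beta^\ell=\CC[\beta]$, a polynomial ring in one variable.

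I expect the main obstacle to be the combinatorial claim of the second paragraph---that a rectangle admits exactly one doubled sub-partition whose complement is a horizontal strip---together with bookkeeping the parity of $d$, which decides whether the generator is the Pfaffian-type invariant in $W_{(2^{d/2})}(V^*)$ for $d$ even or the odd alternating signature in $W_{(2^{(d-1)/2},1)}(V^*)$ for $d$ odd. One can also reach the statement more conceptually by identifying $\Sym(\Lie^{\leq 2}(V^*))$ with $\CC[V\oplus\bigwedge^2 V]$ and analysing its $\SL(V)$-invariants directly, but the route above uses only the machinery already developed.
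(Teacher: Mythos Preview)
Your proposal is correct and follows essentially the same route as the paper: decompose $W_{(2^a,1^b)}(V^*)=\Sym^a(\bigwedge^2V^*)\otimes\Sym^b(V^*)$ via \eqref{eq:SmWedge2} and Pieri's rule, find that a copy of $\s_{(\ell^d)}(V^*)$ occurs precisely for $(a,b)=(\ell e,0)$ when $d=2e$ and $(a,b)=(\ell e,\ell)$ when $d=2e+1$, each with multiplicity one, and then identify these invariants with powers of the alternating signature. Your write-up is more explicit than the paper's in two places---the horizontal-strip/doubled-partition combinatorics pinning down $(a,b)$, and the argument via \Cref{thm:ShuffleThrall} and the integral-domain property that $\beta^\ell$ actually spans the degree-$d\ell$ piece---but these are elaborations of the same argument rather than a different one.
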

\begin{proof}
    Since $\Lie^{\leq 2}(V^*) \cong V^* \oplus \bigwedge^2 V^*$, we compute 
    \[
    W_{(2^{a},1^{b})} = \Sym^{a}(\bigwedge \nolimits^2 V^*) \ot \Sym^{b}(V^*).
    \]
    Using Pieri's rule and the decomposition \eqref{eq:SmWedge2} of \Cref{rem:partionsWithOnly1And2}, one can verify that the only such spaces containing an invariant (i.e.\ a copy of $\s_{(\ell^d)}(V^*)$ for some $\ell$) are:
    \begin{itemize}
        \item $W_{(2^{\ell e})}$, in the case $d=2e$ is even.
        \item $W_{(2^{\ell e},1^\ell)}$, in the case $d=2e+1$ is odd. %
    \end{itemize}
    In both cases the multiplicity of $\s_{(\ell^d)}(V^*)$ is equal to one. 
    These invariants need to be the $\ell$-th powers of the alternating signatures (Pfaffians) mentioned in \Cref{example: alternating signatures}. %
\end{proof}

\begin{proposition}
    If $\dim V=2$, then the  invariant ring $\mathcal{I}_{\leq 3}(V)$ is freely generated by the unique invariant in $W_{(2)}$, and the unique invariant in $W_{(3,1)}$. 
\end{proposition}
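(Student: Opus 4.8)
The plan is to compute $\InvRing_{\leq 3}(V) = \Sym(\Lie^{\leq 3}(V^*))^{\SL(V)}$ directly, exploiting the fact that for $\dim V = 2$ the module $\Lie^{\leq 3}(V^*)$ is very small.

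First I would determine the $\SL(V)$-module structure of $\Lie^{\leq 3}(V^*) = \Lie^1 V^* \oplus \Lie^2 V^* \oplus \Lie^3 V^*$, whose summands have dimensions $2,1,2$ by \eqref{eq: dimension Lie^k}. Here $\Lie^1 V^* = V^*$ is the standard representation; $\Lie^2 V^* = \bigwedge^2 V^*$ is the trivial representation because $\dim V = 2$; and $\Lie^3 V^* \cong \s_{(2,1)}(V^*)$ by \Cref{example: d=k=3} applied to $V^*$, which as an $\SL(V)$-module is isomorphic to $\s_{(1)}(V^*) = V^*$ since the partition $(2,1)$ has $d=2$ parts. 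As $V \cong V^*$ for $\SL_2$, this gives $\Lie^{\leq 3}(V^*) \cong V \oplus V \oplus \CC$, hence an isomorphism of graded $\SL(V)$-algebras $\Sym(\Lie^{\leq 3}(V^*)) \cong \Sym(\CC) \otimes \Sym(V \oplus V)$, and therefore
\[
\InvRing_{\leq 3}(V) \cong \Sym(\CC) \otimes \Sym(V\oplus V)^{\SL(V)}.
\]

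Next I would compute the two factors. The factor $\Sym(\CC) = \CC[z]$ is the polynomial ring on the element $z$ spanning $\Lie^2 V^* = W_{(2)}(V^*)$, on which $\SL(V)$ acts trivially; since $W_{(2)}(V^*)$ is one-dimensional, $z$ is, up to scalar, the unique invariant in $W_{(2)}$. For $\Sym(V\oplus V)^{\SL(V)}$ I would invoke the first fundamental theorem for $\SL_2$, or, to stay self-contained, Clebsch--Gordan: $\Sym^n(V\oplus V) = \bigoplus_{i+j=n}\Sym^i V \otimes \Sym^j V$, and $(\Sym^i V \otimes \Sym^j V)^{\SL_2}$ is one-dimensional when $i=j$ and zero otherwise, so this invariant ring has Hilbert series $1/(1-t^2)$ and equals $\CC[w]$ for the degree-two bracket invariant $w$. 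This $w$ is homogeneous of bidegree $(1,1)$ in the two copies of $V$, hence lies in $\Sym^1(\Lie^1 V^*) \otimes \Sym^1(\Lie^3 V^*) = W_{(3,1)}(V^*)$; as $W_{(3,1)}(V^*) \cong V \otimes V \cong \Sym^2 V \oplus \CC$ has a one-dimensional trivial summand, $w$ is, up to scalar, the unique invariant in $W_{(3,1)}$. Combining, $\InvRing_{\leq 3}(V) = \CC[z] \otimes \CC[w] = \CC[z,w]$, and $z$ and $w$ are algebraically independent because $z$ involves only the $\Lie^2 V^*$ coordinate while $w$ involves only the $\Lie^1 V^*$ and $\Lie^3 V^*$ coordinates and is nonconstant. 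Thus $\InvRing_{\leq 3}(V)$ is freely generated by these two invariants.

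I do not expect a serious obstacle: the only external input is the classical invariant theory of two copies of the standard $\SL_2$-representation, and the Clebsch--Gordan computation makes even that self-contained. The part that needs care is bookkeeping, namely matching the polynomial ($\Sym$-)grading on $\Sym(\Lie^{\leq 3}(V^*))$ with the partition grading on $\TT(V^*)$, and checking that the two generators land in $W_{(2)}$ and $W_{(3,1)}$ exactly rather than in some other Thrall component; this reduces to computing the bidegrees of $z$ and $w$ as indicated above.
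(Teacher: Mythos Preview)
Your proposal is correct and follows essentially the same route as the paper. Both arguments begin by identifying $\Lie^2 V^* \cong \CC$ and $\Lie^3 V^* \cong V^*$ as $\SL_2$-modules; the paper then checks each Thrall module $W_{(3^{a_3},2^{a_2},1^{a_1})}(V^*) \cong \Sym^{a_3}V^* \otimes \Sym^{a_1}V^*$ via Pieri's rule and invokes observation~\ref{it:Product}, while you package the same computation as $\Sym(\CC)\otimes\Sym(V\oplus V)^{\SL_2}$ and use Clebsch--Gordan (which for $\SL_2$ is Pieri) to identify the second factor as a polynomial ring in one variable. The only difference is organizational, and your version spells out the ``freely generated'' conclusion a bit more explicitly.
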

\begin{proof}
     As $\SL_2$-representations, we have the isomorphisms $\Lie^2(V^*) \cong \s_{(2)}(V^*) \cong \CC$ and $\Lie^3(V^*) \cong \s_{(2,1)}(V^*) \cong V^*$, hence
\[W_{(3^{a_3},2^{a_2},1^{a_1})}(V^*) = \Sym^{a_3}(V^*) \ot \Sym^{a_1}(V^*).
    \]
    By Pieri's rule, this space contains an invariant if and only $a_1=a_3$, and the multiplicity is again equal to one. Keeping in mind \ref{it:Product}, this is enough information to conclude.
\end{proof}

We can use higher Lie idempotents to compute invariants in coordinates. In \cite{TheMathRepoFile} we implemented a function \verb|path_invariants| in \verb|Sage| which computes for each partition $\lambda$ of $d\ell$ the space of invariants in $W_{\lambda}(V^*)$. It first computes a basis of the space of invariants, which is the isotypic component $\mathbb{S}_{(\ell^d)}(V^*)^{\oplus {m_{(\ell^d)}}} \subset (V^*)^{\otimes k}$ in the Schur-Weyl decomposition. Next, it computes for each $\lambda$ the higher Lie idempotent $E_{\lambda}$, and projects the aforementioned basis using these idempotents.

    \begin{example}\label{our new invariant}
        Let us take $d=2$ and $k=4$. The multiplicity of $\s_{(2,2)}(V^*)$ in $(V^*)^{\ot 4}$ is equal to 2. 
        A basis for the isotypic component $\s_{(2,2)}(V^*)^{\oplus 2}$ is given by 
        \begin{equation} \label{eq:IsotypicBasis}
            \{\T_{1212}-\T_{1221}-\T_{2112}+\T_{2121},\T_{1122}-\T_{1221}-\T_{2112}+\T_{2211}\}.
        \end{equation}
        We  compute the Thrall coefficients:
        \[
        a^{\lambda}_{(2,2)} =
        \begin{cases}
            1 & \text{for } \lambda \in \{ (2,2), (3,1)\} \\
            0 & \text{otherwise,}
        \end{cases}
        \]
        and the higher Lie idempotents
        {\scriptsize
        \begin{align*}
            E_{(2,2)} =& \frac{1}{8} \big(\id -(132)-(1432)-(12)+(1243)+(13)(24)+(243)+(23)+(123)-(1234)-(34)-(234) \\ & -(1423)-(13)-(143)+(14)+(142)+(1342)+(12)(34)+(134)-(124)-(1324)-(24)+(14)(23)\big) \\
            E_{(3,1)} =& \frac{1}{12} \big(4\id+(132)+(1432)+(12)-2(1243)-2(13)(24)-2(243)-2(23)-2(123)+(1234)+(34)+(234)\\ & +(1423)+(13)+(143)-2(14)-2(142)-2(1342)-2(12)(34)-2(134)+(124)+(1324)+(24)+4(14)(23) \big).
        \end{align*}}%
        If we act with $E_{(2,2)}$ on either of the basis vectors \eqref{eq:IsotypicBasis}, we obtain the (up to scaling) unique invariant $\beta_{(2,2)}$ in $W_{(2,2)}$:
        \begin{equation}\label{eq:oldinv}
        \beta_{(2,2)}(\T) = \frac{1}{4}(\T_{1122}-\T_{1221}-\T_{2112}+\T_{2211}).
        \end{equation}
        As expected, this is simply the shuffle product of \eqref{eq:Levy} with itself; if $\T$ is a signature tensor, then $\beta_{(2,2)}(\T)$ is the square of the Levy area.
        Using $E_{(2,2)}$, we compute the unique invariant in $W_{(3,1)}$: 
        \begin{equation}\label{eq:newinv}
         \beta_{(3,1)}(\T) = \frac{1}{3}(-2\T_{1122}+\T_{1212}+\T_{1221}+\T_{2112}+\T_{2121}-2\T_{2211}).
        \end{equation}
        The invariant $\beta_{(2,2)}$ appears already in \cite[Remark 18]{DiehlReizenstein}, together with a noncanonical second basis vector of the space of invariants. In contrast, the invariant \eqref{eq:newinv} is uniquely defined. It would be interesting to have a geometric interpretation.
    \end{example}

\section{The universal variety and tensor rank}\label{section: tensor rank}
 
Both from a theoretical and from an applied viewpoint, the main way to study a tensor is to look at its decompositions. A tensor $T\in V^{\ot k}$ is called \emph{elementary} if there exist $v_1,\ldots,v_k\in V$ such that $T=v_1\ot\cdots\ot v_k$. A \emph{decomposition} of $T$ is a way to write
 \begin{equation*}\label{eq: tensor decomposition}
T= T_1+\cdots +T_r
 \end{equation*}
 as a sum of elementary tensors. The \emph{rank} of $T$, denoted by $\rk(T)$, is the minimum length of a decomposition of $T$.
This notion of rank generalizes matrix rank. %
As illustrated in \cite[Chapter 1.3]{Lands}, tensor decompositions and tensor rank find direct applications in many areas of applied mathematics. It is therefore natural to wonder about the rank of signature tensors.

Tensor rank is trickier to handle than matrix rank - for instance, we do not have an efficient algorithm to compute the rank of a given tensor. In this section we take the first steps to characterize the rank of a signature tensor. When we deal with tensor rank, it is often convenient to work in the projective space $\p(V^{\ot k})$. With a little abuse of notation, we will still denote by $\U_k(V)\subseteq \p(V^{\ot k})$ the projectivization of the universal variety. Recall that the \emph{Segre variety}
\[\seg_k(V)=\{[T]\in\p( V^{\ot k})\mid \rk(T)=1\}\] is the locus of tensors of rank 1. It is isomorphic to the product of $k$ copies of $\p(V)$ via the \emph{Segre embedding}, see for instance \cite[Exercise 2.14]{hart}.

\begin{theorem}\label{thm: symmetric iff rank 1}
Let $V$ be a $\C$-vector space of dimension $d$ and let $T\in\U_{k}(V)$ be a nonzero signature tensor. Then $T$ has rank 1 if and only if $T$ is symmetric.
\end{theorem}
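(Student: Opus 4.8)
The plan is to analyze what the shuffle identity (\Cref{shuffle identity}) forces on a rank-one element of $\U_k(V)$, and conversely to observe that the map $\UnivMap_k$ sends the "purely symmetric" part of $\Lie^{\le k}(V)$ to rank-one tensors. For the easy direction, suppose $T = v^{\ot k}$ is symmetric of rank one; I would exhibit a path (or a Lie series) whose $k$-th signature is $T$: taking the linear path $X(t) = tv$ gives $\sigma^{(k)}(X) = \tfrac{1}{k!}v^{\ot k}$, so after rescaling $v$ every symmetric elementary tensor lies in $\U_k(V)$. Concretely, the Lie series $\T = \T_{(1)}$ with $\T_{(1)} \in \Lie^1 V = V$ maps under $\UnivMap_k$ to $\tfrac{1}{k!}\T_{(1)}^{\ot k}$, which is an arbitrary symmetric elementary tensor; hence $\seg_k(V) \cap \Sym^k V \subseteq \U_k(V)$, giving the "if" direction.

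For the "only if" direction — which I expect to be the main obstacle — suppose $T = v_1 \ot \cdots \ot v_k \in \U_k(V)$ is nonzero. Because $T \in \G(V) \cap V^{\ot k}$ sits inside a graded piece of a group element, I would first pass to a full group element: there is $\mathcal{T} = 1 + \mathcal{T}_{(1)} + \cdots \in \G(V)$ with $\mathcal{T}_{(k)} = T$ and $\mathcal{T}_{(i)} = 0$ for $0 < i < k$ (this is exactly the structure of $\exp$ applied to a Lie series concentrated in degrees dividing into $k$; more carefully, $\U_k(V)$ is the image of $\Lie^{\le k}(V)$ under $\varphi_k$, and one can choose the Lie series so that lower-degree signature pieces vanish). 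Then the shuffle identity applied in degree $k$ reads $\mathcal{T}_{I \shuffle J} = \mathcal{T}_I \mathcal{T}_J$ for all words $I, J$ with $|I| + |J| = k$ and $|I|, |J| \ge 1$; since $\mathcal{T}_I = \mathcal{T}_J = 0$ when $0 < |I| < k$, this forces $\mathcal{T}_{I \shuffle J} = 0$, i.e. $T_{I \shuffle J} = 0$ for every nontrivial split of $\{1,\dots,k\}$. The key computation is then: a single relation of the form $T_{i \shuffle J} = 0$ for a one-letter word $i$ says $\sum_{p=1}^{k} T_{j_1 \cdots j_{p-1}\, i\, j_p \cdots j_{k-1}} = 0$, and running over all such relations pins down that $T$ must be invariant under all transpositions of adjacent tensor factors, hence symmetric. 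Combined with $\rk(T) = 1$ this gives $T = v^{\ot k}$, but in fact the symmetry conclusion alone (for a rank-one $T$) already yields $v_1 \parallel \cdots \parallel v_k$, so one can also argue directly: rank one plus the vanishing shuffle relations forces all the $v_i$ proportional.

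I anticipate two technical points needing care. First, the reduction to a group element $\mathcal{T}$ with vanishing intermediate degrees: one must check this is legitimate, e.g. by noting $\varphi_k$ only depends on $\Lie^{\le k}(V)$ and that scaling a generic Lie series appropriately, or just using a path whose first $k-1$ signature levels vanish (a "degree-$k$ monomial path"), realizes any $T \in \U_k(V)$ with the desired padding. Second, the combinatorial step deducing full symmetry of $T$ from the relations $T_{I \shuffle J} = 0$: I would do this by induction on $k$, using the one-letter shuffles $T_{i \shuffle J} = 0$ to swap letter $i$ past the block $J$ one position at a time, and comparing consecutive relations to extract equality of entries differing by an adjacent transposition — the base cases $k = 2$ ($T_{12} + T_{21} = 0$ forces... wait, that gives antisymmetry, so here one genuinely needs rank one: a rank-one antisymmetric $2$-tensor is zero unless we are in the symmetric situation, i.e. $v_1 \ot v_2$ with $v_1 \parallel v_2$) clarify that the vanishing-shuffle constraints are only consistent with $\rk T = 1$ when $T$ is already symmetric. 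The cleanest route is probably: $T = v_1 \ot \cdots \ot v_k$ rank one and all $T_{I \shuffle J} = 0$ $\Rightarrow$ write each constraint as a polynomial identity in the $v_i$ and deduce pairwise proportionality, then invoke that a symmetric rank-one tensor has the stated form, and check the "only symmetric tensors of rank one occurring in $\U_k$" claim is vacuous-free because $\Sym^k V \cap \seg_k(V) \subseteq \U_k(V)$ was already established.
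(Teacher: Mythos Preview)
Your proposal has a genuine gap in each direction.

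\textbf{The ``if'' direction (symmetric $\Rightarrow$ rank 1).} You show that $\{v^{\ot k}\mid v\in V\}\subseteq \U_k(V)$, but this is not what is being asked. The hypothesis is that $T\in\U_k(V)$ is symmetric, and you must conclude that $T$ has rank $1$. A priori a symmetric tensor can have large rank (e.g.\ $e_1^{\ot k}+e_2^{\ot k}$), so you need to argue that the only symmetric tensors occurring as signatures are the powers $v^{\ot k}$. The paper does this via the Thrall decomposition: since $\Sym^kV=W_{(1^k)}(V)$ is one summand in the direct sum $V^{\ot k}=\bigoplus_\lambda W_\lambda(V)$ and $\varphi_k=\sum_\lambda f_\lambda$ with $\operatorname{Im}f_\lambda\subseteq W_\lambda(V)$, a symmetric $T=\varphi_k(\mathcal S)$ forces $f_\lambda(\mathcal S)=0$ for $\lambda\neq(1^k)$, hence $T=f_{(1^k)}(\mathcal S)=\tfrac{1}{k!}\mathcal S_{(1)}^{\ot k}$.

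\textbf{The ``only if'' direction (rank 1 $\Rightarrow$ symmetric).} Your reduction step is false: you claim one can choose $\mathcal T\in\G(V)$ with $\mathcal T_{(k)}=T$ and $\mathcal T_{(i)}=0$ for $0<i<k$. But if the degrees $1,\ldots,k-1$ of $\mathcal T$ vanish, then the degree-$k$ part of $\log\mathcal T$ equals $\mathcal T_{(k)}=T$, so $T\in\Lie^kV$. Thus your assumption already restricts you to the Lie piece $W_{(k)}(V)$, not to all of $\U_k(V)$. This is exactly why your $k=2$ sanity check produced antisymmetry: you had silently replaced $\U_2(V)$ by $\Lie^2V=\bigwedge^2V$. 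Consequently the relations $T_{I\shuffle J}=0$ you derive do not hold for a general rank-one signature tensor, and the rest of the argument collapses. The paper instead keeps the full group element $\mathcal T$ (with nonzero low-degree entries $\T_1,\T_d,\ldots$), chooses a basis so that $T=e_1^{\ot m}\ot e_d\ot(\cdots)$ with $m$ maximal, and uses the shuffle identities $\T_1\cdot\T_{1^{m-1}dK}$ and $\T_d\cdot\T_{1^mK}$ together with a recursive lexicographic argument to reach a contradiction unless $m=k$.
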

\begin{proof} %
First assume that $T$ is symmetric. Then
\[
T \in (\operatorname{Im} \varphi_k) \cap \Sym^k(V) = (\operatorname{Im} \varphi_k) \cap W_{(1,\ldots,1)}(V) = \operatorname{Im} f_{(1,\ldots,1)} = \left\{ v^{\ot k} \mid v \in V \right\},
\]
therefore it has rank one.
Conversely, assume now that $T$ has rank 1, that is there exist $v_1,\ldots,v_k\in V$ such that $T=v_1\otimes \cdots \otimes v_k$. Since $T\in\U_{k}(V)$, there is a tensor series $\T\in\G(V)$ such that $T$ is the $k$-th level of $\T$. In order to prove that $T$ is symmetric, we are going to apply the shuffle identity, see Theorem \ref{shuffle identity}.
 Let
\[
m=\max\{n\in\N\mid v_1,\dots,v_n\mbox{ are scalar multiples of } v_1\}.
\]
We want to prove that $m=k$. Assume by contradiction that $m<k$. Choose a basis $\{e_1,\dots,e_d\}$ of $V$ such that $v_1=e_1$ and $v_{m+1}=e_d$. After rescaling,
\[T=e_1^{\ot m}\otimes e_d\ot(v_{m+2}\ot\cdots\ot v_k).\]
In order to find a contradiction, we want to prove that $\T_{1^mdK}=0$ for every word $K$ of length $k-m-1$. We will show that recursively, by considering all such words in the lexicographic order. By construction, if $L$ is a length $k$ word, $\T_L\neq 0$ only if the first $m$ letters of $L$ are ones. On the one hand we have 
\begin{equation} \label{eq:T1}
\T_{1} \cdot \T_{1^{m-1}dK}= \T_{1\shuffle (1^{m-1}dK)}= m \T_{1^mdK},
\end{equation}
because for every other word $w$ in the shuffle product $1^m \shuffle dK$ the first $m$ letters will not be all ones and therefore $T_w=0$. On the other hand we have 
\[
\T_{d} \cdot \T_{1^mK}=\T_{d\shuffle (1^mK)}= \T_{1^mdK} + \sum_{J}{\T_{1^mJ}}
\]
where the sum is over words $J$ that are obtained by inserting the letter $d$ somewhere in $K$. %
Note that all such words $1^mJ$ are either equal to $1^mdK$ or lexicographically smaller. We can assume recursively that we have already proven that $\T_{1^mJ}=0$ when ${1^mJ}$ is smaller, so $\mathcal{T}_{d} \cdot \mathcal{T}_{1^mK}$ is a multiple of $\mathcal{T}_{1^mdK}$. In either case we find 
\begin{equation} \label{eq:Td}
\T_{d} \cdot \T_{1^mK}= \lambda \T_{1^mdK}.
\end{equation}
for some nonzero scalar $\lambda$. Now if we had $\T_{1^mdK} \neq 0$, then \eqref{eq:T1} would imply that $\T_{1^{m-1}dK}\neq 0$ and \eqref{eq:Td} would imply that $\T_d\neq 0$, hence
\[
0\neq \mathcal{T}_d \cdot \mathcal{T}_{1^{m-1}dK}=\mathcal{T}_{d\shuffle(1^{m-1}dK)}.
\]
But every summand of $d \shuffle 1^{m-1}dK$ does not start with $m$ ones, so %
$\mathcal{T}_{1^mdK} = 0$, a contradiction. \end{proof}

Geometrically, Theorem \ref{thm: symmetric iff rank 1} means that the set-theoretic intersection between the universal variety and the Segre variety is exactly the Veronese variety. The theorem also tells us that, while we can certainly decompose a signature tensor $T$ as a sum of rank 1 tensors, in general we cannot decompose $T$ as a sum of rank 1 signature tensors. Indeed, if $T$ has rank $r>1$ and $T=T_1+\cdots+T_r$ is a decomposition as a sum of rank 1 signature tensors, then $T_1,\dots,T_r$ are symmetric by Theorem \ref{thm: symmetric iff rank 1}. Therefore $T$ is symmetric as well and so $T$ has rank 1, a contradiction.

As highlighted in \cite[Proposition 6.3]{AFS19}, if the level-one signature is nonzero, then the $(k-1)$-th level signature is a function of the $k$-th level and the first level. %
This feature of signatures together with \Cref{thm: symmetric iff rank 1} tells us that all the previous level of the signature are symmetric, and hence of rank one, once we show symmetry in one level.

\begin{proposition}\label{prop:symm}
Let $\T=\T_{(1)}+\T_{(2)}+\dots\in\Lie((V))$ be a Lie series with $\T_{(1)}\neq 0$. Fix an integer $k\geq 2$. If $\varphi_k(\T)$ is symmetric, then $\T_{(i)}=0$ for every $i\in\{2,\ldots,k\}$ and $\varphi_i(\T)$ is symmetric for every $i\in\{1,\ldots,k\}$.
\end{proposition}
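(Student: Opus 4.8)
The plan is to reduce everything to \Cref{thm: symmetric iff rank 1} via the Thrall decomposition, exploiting the grading of $V^{\ot k}$ into the modules $W_\lambda(V)$. First I would observe that $\varphi_k(\T) = \sum_{\lambda \vdash k} f_\lambda(\T)$, and that $f_\lambda(\T)$ lands in $W_\lambda(V)$. Since the $W_\lambda(V)$ are linearly independent in $V^{\ot k}$ by \Cref{thm:Wiso}, and $\Sym^k(V) = W_{(1,\dots,1)}(V)$ is itself one of the Thrall summands, the hypothesis that $\varphi_k(\T) \in \Sym^k(V)$ forces $f_\lambda(\T) = 0$ for every $\lambda \neq (1,\dots,1)$. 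In particular $\varphi_k(\T) = f_{(1,\dots,1)}(\T) = \tfrac{1}{k!}\T_{(1)}^{\ot k}$, which is a nonzero symmetric tensor of rank one (here we use $\T_{(1)} \neq 0$).

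Next I would extract the vanishing of the higher Lie components. The cleanest route: since $\varphi_k(\T)$ is rank one and symmetric and equals $\tfrac{1}{k!}\T_{(1)}^{\ot k}$, we know $\exp(\T) \in \G(V)$ has its $k$-th level equal to $\tfrac{1}{k!}\T_{(1)}^{\ot k}$. Now I would argue inductively that $\T_{(i)} = 0$ for $i = 2,\dots,k$. The key is that the component $f_\lambda(\T) = 0$ for $\lambda = (i, 1^{k-i})$ reads (up to a nonzero scalar) as a symmetrization of $\T_{(i)} \ot \T_{(1)}^{\ot(k-i)}$, i.e. $\sum_{\alpha \in \compositions(i,1^{k-i})} \T_{(\alpha_1)} \ot \cdots \ot \T_{(\alpha_{k-i+1})} = 0$. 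Since $\T_{(1)} \neq 0$, pick a linear functional $\psi \in V^*$ with $\psi(\T_{(1)}) \neq 0$ and apply $\id \ot \psi \ot \cdots \ot \psi$ (contracting the last $k-i$ slots): the terms where $\T_{(i)}$ occupies one of the contracted slots die or get absorbed, and one reads off that $\T_{(i)}$ is a scalar multiple of (a symmetric contraction of) $\T_{(1)}^{\ot i}$. But $\T_{(i)} \in \Lie^i V$, and for $i \geq 2$ the only element of $\Lie^i V$ lying in the line spanned by a symmetric tensor $\T_{(1)}^{\ot i}$ is zero, since $\Lie^i V \cap \Sym^i V = 0$ (already $\Lie^2 V = \bigwedge^2 V$ meets $\Sym^2 V$ trivially, and in general the Thrall decomposition gives $\Lie^i V = W_{(i)}(V)$ disjoint from $W_{(1^i)}(V) = \Sym^i V$). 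Hence $\T_{(i)} = 0$ for all $i \in \{2,\dots,k\}$.

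Finally, once $\T_{(2)} = \cdots = \T_{(k)} = 0$, for each $i \in \{1,\dots,k\}$ we have $\varphi_i(\T) = p_i(\exp(\T_{(1)})) = \tfrac{1}{i!}\T_{(1)}^{\ot i}$ directly from \eqref{eq:univVarietyMap}, which is manifestly symmetric (and rank one). This gives the last assertion. I expect the main obstacle to be the bookkeeping in the inductive step: making precise that $f_{(i,1^{k-i})}(\T) = 0$ really does pin down $\T_{(i)}$ up to scalar as a symmetric tensor. An alternative, perhaps slicker, route that avoids this is to invoke \cite[Proposition 6.3]{AFS19} as mentioned in the paragraph preceding the statement — symmetry (hence rank one) of level $k$ propagates down to all lower levels — combined with the observation that a Lie series whose levels are all symmetric must be the exponential of a degree-one element, because $\Lie^i V \cap \Sym^i V = 0$ forces each log-signature component above degree one to vanish. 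I would likely present the argument in the second form, citing \Cref{thm: symmetric iff rank 1} and the AFS reduction, and then close with the one-line computation $\varphi_i(\T) = \tfrac{1}{i!}\T_{(1)}^{\ot i}$.
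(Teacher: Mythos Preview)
Your overall strategy coincides with the paper's: use the Thrall decomposition \eqref{eq:mainDecomposition} to see that $\varphi_k(\T)\in\Sym^kV=W_{(1^k)}(V)$ forces $f_\lambda(\T)=0$ for every $\lambda\neq(1^k)$, then extract $\T_{(i)}=0$ from the vanishing of $f_{(i,1^{k-i})}(\T)$, and finally conclude $\varphi_i(\T)=\tfrac{1}{i!}\T_{(1)}^{\ot i}$. Your first and third paragraphs are exactly what the paper does.

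Where you diverge is the middle step, and there you make it harder than necessary and introduce a gap. Your contraction argument does not yield what you claim: applying $\id\ot\psi^{\ot(k-i)}$ to $\sum_{\alpha\in\compositions(i,1^{k-i})}\T_{(\alpha_1)}\ot\cdots\ot\T_{(\alpha_{k-i+1})}$ does not isolate $\T_{(i)}$ as ``a scalar multiple of a symmetric contraction of $\T_{(1)}^{\ot i}$''; the terms where $\T_{(i)}$ straddles the contracted slots produce partial contractions of $\T_{(i)}$ tensored with copies of $\T_{(1)}$, and you would need further work to disentangle them. The paper avoids this entirely: recall that \Cref{thm:Wiso}, which you already invoke, asserts that each $g_\lambda$ is \emph{injective}. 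Since $f_{(i,1^{k-i})}=g_{(i,1^{k-i})}\circ\nu_{(i,1^{k-i})}$ and $\nu_{(i,1^{k-i})}(\T)=\T_{(i)}\ot\T_{(1)}^{\ot(k-i)}\in\Lie^iV\ot\Sym^{k-i}V$, the vanishing of $f_{(i,1^{k-i})}(\T)$ immediately gives $\T_{(i)}\ot\T_{(1)}^{\ot(k-i)}=0$, hence $\T_{(i)}=0$ because $\T_{(1)}\neq0$. No contractions, no appeal to $\Lie^iV\cap\Sym^iV=0$, no induction.

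Two smaller remarks. First, \Cref{thm: symmetric iff rank 1} is not used in the paper's proof of this proposition and is not needed in yours either; your opening sentence overstates its role. Second, your alternative route via \cite[Proposition~6.3]{AFS19} would require an extra argument: that proposition tells you the $(k{-}1)$-th level is determined by the $k$-th and first levels, but you would still have to check that the resulting $(k{-}1)$-th level is symmetric, which is not immediate from the statement alone.
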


\begin{proof}
The first statement is a consequence of the Thrall decomposition, see \Cref{thm:Wiso}. 
Indeed, by assumption $\varphi_k(\T)\in \Sym^k(V)= W_{(1,\dots,1)}$. This implies that all summands of $\varphi_k(\T)$ given via the Thrall decomposition must be zero. The summand in $W_{(i,1^{k-i})}$ is equal to $\T_{(i)} \ot \T_{(1)}^{\ot k-i}$. Since $\T_{(1)} \neq 0$ by assumption, we conclude that $\T_{(i)}=0$ for every $i\in \{2,\ldots,k\}$. It then follows that $\varphi_i(\T) = \T_{(1)}^{\ot i}$ is symmetric for every $i\in \{1,\ldots,k\}$.
\end{proof}

Our result above suggests a promising connection between studying the rank of signature tensors and the complexity of paths, especially in light of the recent characterization of straight line segments in terms of finite support of the log-signature  \cite[Theorem 1.4]{friz2023rectifiable}. We make this relation explicit in the following corollary.

\begin{corollary}\label{cor:FLS}
    Let $\T=\T_{(1)}+\T_{(2)}+\dots\in\Lie((V))$ be the log-signature of a rectifiable path with $\T_{(1)}\neq 0$ and nonzero signature tensors $\varphi_k(\T) \in V^{\ot k}$. Then the following statements are equivalent to the path being a straight line segment (up to reparametrization and translation):
\begin{enumerate}
    \item[(a)] $\T_{(i)}=0$ for every $i > 1$.
    \item[(a')] There exists $k \in \NN$ such that $\T_{(i)}=0$ for every $i > k$.
    \item[(b)] $\varphi_i(\T)$ is symmetric for every $i > 1$.
    \item[(b')] There exists $k \in \NN$ such that $\varphi_i(\T)$ is symmetric for every $i > k$.
    \item[(c)] $\varphi_i(\T)$ has rank 1 for every $i > 1$.
    \item[(c')] There exists $k \in \NN$ such that $\varphi_i(\T)$ has rank one for every $i > k$.
\end{enumerate}   
\end{corollary}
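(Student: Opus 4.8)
The plan is to reduce the statement to the two results just established, \Cref{thm: symmetric iff rank 1} and \Cref{prop:symm}, together with the cited rectifiability criterion \cite[Theorem 1.4]{friz2023rectifiable}, and then to run one cycle of implications through all six conditions, using (a) as the hub. Two standing observations keep the bookkeeping light. First, since $\exp(\T)\in\G(V)$ we have $\varphi_i(\T)\in\U_i(V)$ for all $i$, and by hypothesis every $\varphi_i(\T)$ is nonzero, so \Cref{thm: symmetric iff rank 1} applies to each of them: for $i\geq 2$, the tensor $\varphi_i(\T)$ has rank one if and only if it is symmetric. Consequently (b) $\Leftrightarrow$ (c) and (b') $\Leftrightarrow$ (c'). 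Second, the implications (b) $\Rightarrow$ (b'), (c) $\Rightarrow$ (c') and (a) $\Rightarrow$ (a') are trivial (take $k=1$).

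Next I would prove the two implications (a) $\Rightarrow$ (b) and (b') $\Rightarrow$ (a). For the first: if $\T_{(i)}=0$ for every $i>1$, then in the parametrization \eqref{eq:univVarietyMap} only the tuple $(1,\dots,1)$ contributes, so $\varphi_i(\T)=\tfrac{1}{i!}\,\T_{(1)}^{\ot i}$, which is symmetric (and, being a nonzero multiple of $\T_{(1)}^{\ot i}$, of rank one) for every $i$. For the second: if there is a $k$ such that $\varphi_i(\T)$ is symmetric for all $i>k$, then for every integer $n>\max(k,1)$ we have $n\geq 2$, and \Cref{prop:symm} (valid since $n\geq 2$ and $\T_{(1)}\neq 0$) gives $\T_{(j)}=0$ for $2\leq j\leq n$; letting $n\to\infty$ yields (a). Combining these with the standing observations shows that (a), (b), (b'), (c), (c') are pairwise equivalent.

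It then remains to tie this common condition to the geometric statement and to (a'). By \cite[Theorem 1.4]{friz2023rectifiable}, a rectifiable path is, up to reparametrization and translation, a straight line segment precisely when its log-signature has finite support, i.e.\ precisely when (a') holds; moreover the log-signature of the segment $t\mapsto tv$ equals $v\in\Lie^1 V$, and the signature is unchanged by reparametrization and translation, so such a path in fact satisfies (a). Together with the trivial (a) $\Rightarrow$ (a') this closes the loop, so that the path being a straight line segment (up to reparametrization and translation) is equivalent to each of (a), (a'), (b), (b'), (c), (c').

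I do not expect a genuine obstacle: the substance is supplied by \Cref{thm: symmetric iff rank 1}, \Cref{prop:symm} and the cited theorem, and what is left is careful tracking of the thresholds $i>1$ versus $i>k$, of the hypothesis $k\geq 2$ of \Cref{prop:symm}, and of the fact that each $\varphi_i(\T)$ is nonzero so that rank one and symmetry can be used interchangeably. The one place where rectifiability is truly needed — as opposed to a purely algebraic argument — is the step (a') $\Rightarrow$ (a): a Lie series such as $\T_{(1)}+\T_{(2)}$ with both parts nonzero has finite support but is not concentrated in degree one, so this implication must pass through \cite[Theorem 1.4]{friz2023rectifiable} and cannot come from the Thrall decomposition alone.
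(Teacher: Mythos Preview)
Your proposal is correct and follows essentially the same route as the paper: both arguments use \Cref{thm: symmetric iff rank 1} to identify rank one with symmetry on each nonzero $\varphi_i(\T)$, use \Cref{prop:symm} to pass from (b') back to (a), compute $\varphi_i(\T)=\tfrac{1}{i!}\T_{(1)}^{\ot i}$ for (a)$\Rightarrow$(b), and invoke \cite[Theorem 1.4]{friz2023rectifiable} for the only nontrivial step (a')$\Rightarrow$(a). Your organization (pairing (b)$\Leftrightarrow$(c), (b')$\Leftrightarrow$(c') upfront and funneling through (a)) is slightly tidier than the paper's, and your closing remark that (a')$\Rightarrow$(a) genuinely requires rectifiability is a useful clarification the paper omits.
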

\begin{proof}
It is clear that $(a),(b),(c)$ imply their prime versions. The first converse $(a')\Rightarrow(a)$ is the content of \cite[Theorem 1.4]{friz2023rectifiable}, and establishes the equivalence to the path being a line segment. Such a path has $\varphi_k(\T)=\T_{(1)}^{\otimes k}$ for every $k\in\N$, so that $(a)\Rightarrow(b)$ and $(a)\Rightarrow(c)$. Furthermore, by \Cref{prop:symm} we have that the implications $(b')\Rightarrow(b)$ and $(b')\Rightarrow(a)$ hold. Finally, using \Cref{thm: symmetric iff rank 1} we obtain that $(b)\Rightarrow(c)$ and $(c')\Rightarrow(b')$.
\end{proof}

\Cref{cor:FLS} can be seen as a starting point of the study of rank and symmetries of signatures. We refer to the recent preprint \cite{GS24} for generalizations.

Next we briefly discuss the rank of a general signature tensor. We say that ``the general element of a set $X$ satisfies a certain property" if the subset of elements satisfying that property is dense in $X$ with respect to the Zariski topology. It is natural to wonder whether signature tensors can have any rank. This question has a nice geometric interpretation. When $k\ge 3$, the locus of tensors of rank at most $r$ does not need to be closed. Its closure is called the \emph{$r$-secant variety} to the Segre variety and denoted by
\[\mathrm{Sec}_r(\seg_k(V))=\overline{\{[T]\in\p( V^{\ot k})\mid \rk(T)\le r\}}.\]
Recall that $\dim\mathrm{Sec}_r(\seg_k(V))\le \min\{rd-1,d^k-1\}$. Saying that the general tensor has rank at most $r$ is equivalent to saying that $\mathrm{Sec}_r(\seg_k(V))=\p(V^{\ot k})$. Saying that the general signature tensor has rank at most $r$ is equivalent to saying that $\U_k(V)\subseteq\mathrm{Sec}_r(\seg_k(V))$. We state this as an open problem for future research.

\begin{problem}\label{conj: general rank}
What is the rank of a general signature tensor? In particular, determine whether such signature tensors have generic rank.
\end{problem} 

From a practical viewpoint, we are wondering whether knowing the rank of a given tensor gives us information to determine whether it is a signature or not. %
We now solve Problem \ref{conj: general rank} for matrices. %
From \eqref{eq:univVarietyMap} we know that the universal variety of matrices $\U_{2}(V)$ is the closure of the image of
\begin{align*}
V \oplus \bigwedge \nolimits^2 V&\longrightarrow V^{\ot 2} \label{eq: param signature matrix}\\
	(x,A) & \mapsto xx^T + A.\nonumber
\end{align*}

\begin{lemma} \label{lem:rankOfSkewSymPlusRankOneSym}
    Let $A \in \bigwedge^2 V$ be a skew-symmetric matrix, and $x \in V$. Then
    \[
\Rk(A+xx^T) = \begin{cases}
\Rk(A) & \text{if } x \in \mathrm{rowspan}(A), \\
\Rk(A)+1 & \text{if } x \notin \mathrm{rowspan}(A).
\end{cases}
    \]
\end{lemma}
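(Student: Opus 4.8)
The statement is a clean linear-algebra fact, so the plan is to reduce to a normal form for $A$ and then compute directly. First I would put the skew-symmetric matrix $A$ into its canonical block form: over $\CC$ there is $g \in \GL(V)$ such that $g A g^T$ is block diagonal with $\Rk(A)/2$ blocks $\begin{psmallmatrix} 0 & 1 \\ -1 & 0 \end{psmallmatrix}$ and a zero block of size $d - \Rk(A)$; note $\Rk(A)$ is automatically even. Since $A + xx^T \mapsto g(A+xx^T)g^T = gAg^T + (gx)(gx)^T$ under congruence, and congruence preserves rank as well as the conditions ``$x \in \mathrm{rowspan}(A)$'' (because $\mathrm{rowspan}(gAg^T) = g^{-T}\mathrm{rowspan}(A)$ — one should state this compatibility carefully), it suffices to prove the claim when $A$ is already in canonical form. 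Then $\mathrm{rowspan}(A)$ is exactly the span of the first $\Rk(A)$ coordinate vectors.

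\textbf{Main computation.} Write $A$ in the normal form above and split $x = (x', x'')$ according to the block decomposition, where $x'' \in \CC^{d-\Rk(A)}$ corresponds to the zero block of $A$. The matrix $A + xx^T$ has the shape $\begin{psmallmatrix} A_0 + x'x'^T & x'x''^T \\ x''x'^T & x''x''^T \end{psmallmatrix}$ where $A_0$ is the invertible skew part. Case 1: $x \in \mathrm{rowspan}(A)$ means exactly $x'' = 0$, so $A + xx^T = \begin{psmallmatrix} A_0 + x'x'^T & 0 \\ 0 & 0\end{psmallmatrix}$, and $A_0 + x'x'^T$ is invertible because $A_0$ is skew-symmetric and invertible while $x'x'^T$ is symmetric — their sum is nonsingular since if $(A_0 + x'x'^T)v = 0$ then $v^T A_0 v = -v^T x' x'^T v = -(x'^T v)^2$, but $v^T A_0 v = 0$ for any vector (skew-symmetry), forcing $x'^T v = 0$, hence $A_0 v = 0$, hence $v = 0$. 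So the rank is $\Rk(A_0) = \Rk(A)$. Case 2: $x'' \neq 0$. Here I would argue the rank goes up by exactly one: it is at most $\Rk(A) + 1$ since $A + xx^T$ is $A$ plus a rank-one matrix, and it is at least $\Rk(A)+1$ because, after column-reducing to kill $A_0 + x'x'^T$ down to $A_0$-size using its invertibility (row/column operations using the first $\Rk(A)$ pivots), one is left with a Schur-complement block of the form $x''x''^T - x''x'^T A_0^{-1} \cdot(\text{something})\cdot x'x''^T = c\, x'' x''^T$ for a scalar $c$; one checks $c \neq 0$ using that $x'^T A_0^{-1} x' = 0$ (again skew-symmetry of $A_0^{-1}$), so the Schur complement is $x'' x''^T \neq 0$, contributing rank exactly one. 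Combining, $\Rk(A + xx^T) = \Rk(A) + 1$.

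\textbf{Alternative and expected obstacle.} A cleaner route avoiding case analysis with Schur complements: use that for a symmetric-plus-skew decomposition, $\Rk(A + xx^T) \in \{\Rk(A), \Rk(A)+1, \Rk(A)+2\}$ since we add a rank-one piece, but the value $\Rk(A)+2$ is impossible because $xx^T$ contributes a symmetric rank-one perturbation — more precisely, compare the ranks of $A+xx^T$ and $A - xx^T$ (congruent via $\mathrm{diag}$ on the relevant subspace? no — better: $A+xx^T$ and $A+xx^T$ transposed $= -A + xx^T$, so $A+xx^T$ and $-(A - xx^T)$ are transposes hence have equal rank, and $(A+xx^T)+(-(A-xx^T)) = 2xx^T$ has rank one, forcing $|\Rk(A+xx^T) - \Rk(A - xx^T)| \le 1$, and these two ranks are equal, but this does not immediately pin things down). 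I expect the genuinely delicate point is \emph{not} the ``$\le \Rk(A)+1$'' bound, which is trivial, but proving rank does \emph{not drop below} $\Rk(A)$ when $x \notin \mathrm{rowspan}(A)$ and showing it \emph{does} stay at $\Rk(A)$ (rather than rising) when $x \in \mathrm{rowspan}(A)$; both hinge on the identity $v^T A v = 0$ for skew-symmetric $A$, which decouples the symmetric and skew contributions to the kernel. I would therefore organize the writeup around that identity: reduce to normal form, then in each case identify $\ker(A+xx^T)$ explicitly using $v^T(A+xx^T)v = (x^T v)^2 \cdot(\pm 1)$ being forced to vanish.
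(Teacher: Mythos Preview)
Your argument is correct. Both cases go through: in Case~1 the kernel computation is sound over $\CC$ since $v^T A_0 v = 0$ holds for any skew-symmetric $A_0$ in characteristic $\neq 2$; in Case~2 the Schur complement of the invertible block $A_0 + x'x'^T$ is $x''x''^T - x''x'^T(A_0 + x'x'^T)^{-1}x'x''^T$ (note the inverse is of $A_0 + x'x'^T$, not $A_0$ as you wrote), and by Sherman--Morrison together with $x'^T A_0^{-1} x' = 0$ one gets $x'^T(A_0+x'x'^T)^{-1}x' = 0$, so the Schur complement is exactly $x''x''^T$, of rank one. Your reduction to normal form is also fine: congruence by $g$ sends $x$ to $gx$ and $\mathrm{rowspan}(A)$ to $\mathrm{rowspan}(gAg^T)$ compatibly.

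The paper takes a different and somewhat slicker route. Rather than normalizing $A$, it restricts to the subspace $W = \mathrm{rowspan}(A)$: when $x \in W$, both $A$ and $xx^T$ live in $W^{\ot 2}$, and there $A$ is invertible, so one applies the \emph{matrix determinant lemma} $\det(A+xx^T) = \det(A) + x^T \mathrm{adj}(A)\, x$; the second term vanishes because $\mathrm{adj}(A)$ is again skew-symmetric (here $\dim W$ is even), yielding $\det(A+xx^T) = \det(A) \neq 0$ in one stroke. The case $x \notin W$ is dispatched as ``clear'' (it amounts to the observation that $x \notin \mathrm{im}(A)$ forces some $v \in \ker A$ with $x^T v \neq 0$). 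Your approach trades that single determinant identity for a more explicit block computation; it is more elementary in that it avoids the adjugate and the determinant lemma, but at the cost of the normal-form reduction and the Schur-complement bookkeeping. The conceptual core---that $v^T(\text{skew})v = 0$ decouples the skew and symmetric contributions---is the same in both.
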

\begin{proof}
    Let us write $\mathrm{rowspan}(A)=W \subseteq V$. Then $A \in \bigwedge^2W \subseteq \bigwedge^2V$ with $\dim W = \Rk(A)$. If $x \notin W$, then clearly $\Rk(A+xx^T) = \Rk(A)+\Rk(xx^T)=\Rk(A)+1$. On the other hand, if $x \in W$, then we have $A+xx^T \in W^{\ot 2}$, hence $\Rk(A+xx^T) \leq \dim W = \Rk(A)$. To show that this inequality cannot be strict, we apply the \emph{matrix determinant lemma} to $A$, viewed as a full-rank matrix in $W^{\ot 2}$:
    $$
	\det(A+xx^T)=\det(A)+ x^T\mathrm{adj}(A)x. 
	$$
    Since $\mathrm{adj}(A)$ is skew-symmetric the second summand vanishes, so we obtain $\det(A+xx^T)=\det(A) \neq 0$, hence $\Rk(A+xx^T) = \Rk(A)$.
\end{proof}
\begin{corollary}
If $A \in \bigwedge^2 V$ is a general skew-symmetric matrix and $x \in V$ is a general vector, then $\Rk(A+xx^T)=d$. In particular, the general element of $\U_2(V)$ has generic rank.
\end{corollary}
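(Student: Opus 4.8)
The plan is to deduce this corollary directly from \Cref{lem:rankOfSkewSymPlusRankOneSym}, treating separately the parity of $d$. First I would recall the standard fact that a general skew-symmetric matrix $A \in \bigwedge^2 V$ has rank equal to $d$ if $d$ is even and $d-1$ if $d$ is odd, since the rank of a skew-symmetric matrix is always even and these are the maximal possible values, attained on a dense subset. I would also note that $\mathrm{rowspan}(A)$ has dimension $\Rk(A)$, so $\mathrm{rowspan}(A) = V$ precisely when $d$ is even, and is a hyperplane when $d$ is odd.

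When $d$ is even, for a general $A$ we have $x \in V = \mathrm{rowspan}(A)$ automatically for every $x$, so \Cref{lem:rankOfSkewSymPlusRankOneSym} gives $\Rk(A + xx^T) = \Rk(A) = d$. When $d$ is odd, for a general $A$ the rowspan is a fixed hyperplane, and a general vector $x$ does not lie in it, so the lemma gives $\Rk(A + xx^T) = \Rk(A) + 1 = (d-1) + 1 = d$. In both cases $\Rk(A + xx^T) = d$. Since this holds on a dense subset of $\bigwedge^2 V \times V$, and the parametrization $(x,A) \mapsto xx^T + A$ of (a dense subset of) $\U_2(V)$ is dominant, the image of this dense set is dense in $\U_2(V)$; hence the general element of $\U_2(V)$ has rank $d$. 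Finally, $d = \min\{2 \cdot d - 1, d^2 - 1\} + 1$? — more carefully: the generic rank of a $d \times d$ matrix is exactly $d$, so a general element of $\p(V^{\ot 2})$ has rank $d$, and since the general element of $\U_2(V)$ also has rank $d$, it has generic rank.

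The only mild subtlety — and the step I would be most careful about — is making sure the genericity statements compose correctly: I need that the locus where $A$ has maximal rank \emph{and} $x$ avoids $\mathrm{rowspan}(A)$ (in the odd case) is genuinely dense in $\bigwedge^2 V \times V$, which follows because it is the complement of a proper Zariski-closed set (the vanishing of the appropriate Pfaffian-type minors for $A$, together with the incidence condition $x \in \mathrm{rowspan}(A)$ cutting out a proper subvariety once $\Rk(A)$ is fixed at $d-1$), and then that the continuous (indeed algebraic) surjection onto a dense subset of $\U_2(V)$ carries dense sets to dense sets. None of this requires new ideas beyond the lemma; it is essentially bookkeeping with dimensions and parities.
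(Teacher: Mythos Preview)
Your proposal is correct and follows essentially the same approach as the paper: split by parity of $d$, invoke \Cref{lem:rankOfSkewSymPlusRankOneSym} in each case to get $\Rk(A+xx^T)=d$, and then conclude that the general element of $\U_2(V)$ has rank $d$, which is the generic matrix rank. You are more explicit about the genericity bookkeeping than the paper (which simply notes that $\U_2(V)$ contains rank-$d$ elements and hence is not contained in any smaller secant variety), and your momentary detour through the secant dimension formula is unnecessary since, as you immediately observe, the generic rank of a $d\times d$ matrix is just $d$.
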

\begin{proof}
If $n$ is even then $\Rk(A) = d$ and we are done by \Cref{lem:rankOfSkewSymPlusRankOneSym}. If $d$ is odd then $\Rk(A)=d-1$ but $x \notin \mathrm{rowspan}(A)$ so again by \Cref{lem:rankOfSkewSymPlusRankOneSym} we have $\Rk(A+xx^T)=d$. Now, since $\U_2(V)$ contains elements $A+x^{\ot 2}$ of general rank $d$, it is not contained in any smaller secant variety of the Segre variety. Hence the general element of $\U_2(V)$ has rank $d$.
\end{proof}

By \cite[Theorem 6.1]{AFS19}, the universal variety $\U_k(V)$ has dimension $\mu_{1,d}+\ldots+\mu_{k,d}$ - see \eqref{eq: dimension Lie^k} - so we can compare it to the dimension of $\mathrm{Sec}_r(\seg_k(V))$ to get a first lower bound on the rank. Indeed, if $\dim\U_k(V)>\dim\mathrm{Sec}_r(\seg_k(V))$, then $\U_k(V)\not\subseteq\mathrm{Sec}_r(\seg_k(V))$ and therefore the general signature tensor has rank at least $r$. In this way we can see that the general rank of a signature is at least
\[
\rd{\frac{d^k(d-1) -d(d^{k/2}-1)}{(d-1)k(kd-k+1)}}-1.
\]

We close this section by showing an example where 
choosing coordinates compatible with the Thrall decomposition (cfr.\ \Cref{rmk:GaluppiCoords}) 
can also be interesting in a more classical algebraic geometric setting, revealing some inner structure of tensor varieties.

\begin{example}%
\label{example: easy equation for hyperdeterminant}
Denote by $\mathrm{Seg}_3(\mathbb{C}^2)\subset \PP^7$ the Segre variety given by three copies of the projective line. Call $\tau(\mathrm{Seg}_3(\mathbb{C}^2))$ the tangential variety of $\mathrm{Seg}_3(\mathbb{C}^2)$. If we set coordinates $a_{i,j,k}$ on $\PP^7$, the equation of $\tau(\mathrm{Seg}_3(\mathbb{C}^2))$ is the hyperdeterminant
\begin{align*}
	\hbox{Hdet}(A)=\left( \begin{vmatrix}
		a_{0,0,0} & a_{0,0,1} \\
		a_{1,0,0} & a_{1,1,1}
	\end{vmatrix} +\begin{vmatrix}
		a_{0,1,0} & a_{0,0,1} \\
		a_{1,1,0} & a_{1,0,1}
	\end{vmatrix} \right)^2 -4 \begin{vmatrix}
		a_{0,0,0} & a_{0,0,1} \\
		a_{1,0,0} & a_{1,0,1}
	\end{vmatrix} \cdot \begin{vmatrix}
		a_{0,1,0} & a_{0,1,1} \\
		a_{1,1,0} & a_{1,1,1}
	\end{vmatrix},
\end{align*}
see for example \cite[Chapter 14, Proposition 1.7]{GKZ}.
The projective version of the universal variety is the image of the  map  $ 	\varphi \colon \PP(\Lie^{\leq 3}(\mathbb{C}^2)) \dashrightarrow \PP(\mathbb{C}^2\otimes \mathbb{C}^2\otimes \mathbb{C}^2)$. Let $s_1,s_2,t_{12},u_{112},u_{122}$ be coordinates of the weighted projective space $\mathbb{P}(\Lie^{\leq 3}(\mathbb{C}^2)) $. Up to a scalar multiple, the pullback of the hyperdeterminant with respect to the map $ 	\varphi \colon \PP(\Lie^{\leq 3}(\mathbb{C}^2)) \dashrightarrow (\mathbb{C}^2)^{\otimes 3}$ has the elegant expression
	$$
\varphi^*(	\hbox{Hdet}(A))=	\left(s_2\,u_{112}+s_1\,u_{122}\right)^{2}\left(3\,t_{12}^{2}-4\,s_2\,u_{112}-4\,s_1\,u_{122}\right)%
	$$
The code performing the above computation is available on \cite{TheMathRepoFile}.  
\end{example}

We expect that similar factorizations exist for the equations defining other classical tensor varieties. For instance, perhaps one can use the theory of Thrall modules to provide a nice description of the equations of certain secant varieties. We propose a systematic study of these as a future research direction.

{\small
\bibliographystyle{alpha}
\bibliography{References.bib}
}

\noindent{\bf Authors' addresses}
\smallskip
\small 

\noindent Carlos Am\'{e}ndola:
Institut f\"ur Mathematik, Technische Universit\"at Berlin, Straße des 17. Juni 136, 10623 Berlin, Germany.
\hfill {\tt amendola@math.tu-berlin.de}

\noindent Francesco Galuppi: Faculty of Mathematics, Informatics and Mechanics, University of Warsaw, Banacha 2, 02-097 Warsaw, Poland.
\hfill {\tt galuppi@mimuw.edu.pl}

\noindent \'Angel David R\'ios Ortiz:
Université Paris-Saclay, CNRS, Laboratoir de mathématiques d'Orsay,  B\^{a}t,  307, 91405 Orsay, France.
\hfill {\tt angel-david.rios-ortiz@universite-paris-saclay.fr}

\noindent Pierpaola Santarsiero:
Universit\`a di Bologna, Dipartimento di Matematica, Piazza di Porta S. Donato 5, 40126 Bologna, Italy.
\hfill {\tt pierpaola.santarsiero@unibo.it}

\noindent Tim Seynnaeve: Department of Computer Science, KU Leuven, Celestijnenlaan 200A, 3001 Leuven, Belgium.
\hfill {\tt tim.seynnaeve@kuleuven.be}

\end{document}